\documentclass[12pt, a4paper]{amsart}

\usepackage{amsfonts}
\usepackage{amssymb}
\usepackage{amsmath, amsthm,color}
\usepackage{pdfsync}
\usepackage{bbm, dsfont}






\newcommand{\pd}{\partial}



\newcommand{\eps}{\varepsilon}

\newcommand{\om}{\omega}
\newcommand{\Om}{\Omega}


\newcommand{\bC}{\mathbb{C}}

\newcommand{\bT}{\mathbb{T}}
\newcommand{\bD}{\mathbb{D}}

\newcommand{\cB}{\mathcal{B}}

\newtheorem{theorem}{Theorem}[section]
\newtheorem{corollary}[theorem]{Corollary}
\newtheorem{lemma}[theorem]{Lemma}
\newtheorem{prop}[theorem]{Proposition}

\newtheorem{remark}[theorem]{Remark}

\numberwithin{equation}{section}


\begin{document}

\title[Counting eigenvalues of Schr\"odinger operators]{Counting eigenvalues of Schr\"odinger operators with fast decaying complex potentials}

\author{A.~Borichev}
\address[A.~Borichev]{Aix-Marseille University, CNRS, Centrale Marseille, I2M, France}
\email{alexander.borichev@math.cnrs.fr}

\author{R.~L.~Frank}
\address[R.~Frank]{Mathematisches Institut, Ludwig-Maximilians Universit\"at M\"unchen, Germany, and Department of Mathematics, California Institute of Technology, Pasadena, CA 91125, USA}
\email{r.frank@lmu.de}

\author{A.~Volberg}
\address[A.~Volberg]{Department of Mathematics, Michigan State University, East Lansing, MI 48823, USA}
\email{volberg@math.msu.edu}

\thanks{The second and third author acknowledge partial support by the U.S. National Science Foundation through grants DMS-1363432 (R.L.F.) and DMS-1600065 (A.V.). This paper is based upon work supported by the National Science Foundation under Grant No. DMS-1440140 while the second and the third author were in residence at the Mathematical Sciences Research Institute in Berkeley, California, during the Spring 2017 semester. The first and the third authors were partially supported by the  grant  346300 for IMPAN from the
Simons Foundation and the matching 2015-2019 Polish MNiSW fund.}

\makeatletter
\@namedef{subjclassname@2010}{
  \textup{2010} Mathematics Subject Classification}
\makeatother
\subjclass[2010]{42B20, 42B35, 47A30}
\keywords{}

\begin{abstract} 
We give a sharp estimate of the number of zeros of analytic functions in the unit disc belonging to analytic quasianalytic Carleman--Gevrey classes. As an application, we estimate the number of the eigenvalues for discrete Schr\"odinger operators with rapidly decreasing complex-valued potentials,   and, more generally, for non-symmetric Jacobi matrices.
\end{abstract}

\maketitle 

\section{Introduction and main results}

Bounding the number of eigenvalues of Schr\"odinger-type operators is a classical topic in spectral theory with many applications in mathematical physics. The situation for Schr\"odinger operators with real-valued potentials has been understood for a long time. The qualitative question of whether the operator has finitely or infinitely many eigenvalues depends on whether the potential decays faster or slower than $|x|^{-2}$ at infinity. This qualitative result is accompanied by quantitative upper bounds on the number of eigenvalues like, for instance, the celebrated inequalities by Bargman or by Cwikel--Lieb--Rozenblum. For more details and references we refer to the textbooks \cite{ReSi4,D2}. All these results hold, mutatis mutandis, for discrete Schr\"odinger operators and for Jacobi matrices.

In contrast, the situation for Schr\"odinger operators with \emph{complex-valued} potential is significantly less understood. Such operators are relevant in applications as well, for instance, in the modeling of dissipative phenomena and also as technical tools in the study of resonances of Schr\"odinger operators with real-valued potentials. For further informations, we refer to \cite{Da,EmTr,FrLaLiSe,BoGoKu, HaKa} and references therein.

The conditions for finiteness or infiniteness of the number of eigenvalues in the case of complex-valued potentials are remarkably different from those in the real-valued case. In two fundamental papers \cite{Pa1,Pa2}, Boris Pavlov showed that in the case of complex-valued potentials the number of eigenvalues is finite provided that the potential is bounded by $C_1 e^{-c_2 |x|^{1/2}}$ and that this condition is optimal in the sense that for any $\alpha<1/2$ there is a potential bounded by $C_1’ e^{-c_2’ |x|^{\alpha}}$ with an infinite number of eigenvalues. This is in striking contrast to the real-valued case. Pavlov's result concerns continuous Schr\"odinger operators, but, as pointed out in \cite{GoEg} the result is also true for Jacobi matrices.

This settles the qualitative aspect of the question, but leaves open the question of finding quantitative upper bounds on the number of eigenvalues, for instance, in terms of the constants $C_1$ and $c_2$ in the bound $C_1 e^{-c_2 |x|^{1/2}}$ on the potential. Pavlov's method is intrinsically non-quantitative and cannot provide such a bound. There has been no progress on this question in the past fifty years. 

The fundamental difference between the self-adjoint case of real-valued potentials and the non-selfadjoint case of complex-valued potentials is the lack of a spectral theorem and of a variational characterization of eigenvalues in the latter case. Those play a big role in obtaining both qualitative and quantitative results on eigenvalues in the self-adjoint case. What remains in the non-selfadjoint case are either operator-theoretic tools (as used, for instance, in \cite{FrLaLiSe,GoKu,Ha}) or tools from complex analysis (as used, for instance, in \cite{DeHaKa,BoGoKu,FrSa,Fr3}). The latter typically give more precise results and were also used in Pavlov's original work. The idea is to realize the eigenvalues as zeros of an analytic function (typically a determinant-like quantity), translate bounds on the potential into bounds on this analytic function and then to use complex analytic bounds on the number of zeros in terms of the controlled quantities.

The simplest situation occurs when the potential decays exponentially. In this case, the relevant analytic function has an analytic continuation in a neighborhood of its original domain and bounds on the number of zeros can simply be obtained by Jensen's theorem from complex analysis. This technique was first carried out for complex-potentials by Na{\u\i}mark \cite{Na}. For recent bounds in this case see, for instance, \cite{FrLaSa} and references therein.

In Pavlov's case, where the potential decays like $C_1 e^{-c_2 |x|^{1/2}}$, the relevant analytic function does, in general, not have an analytic continuation to a larger set. To deduce nevertheless that there are only finitely many zeros, Pavlov uses ideas from analytic quasi-analyticity and shows that the function belongs to a Gevrey class and therefore cannot have infinitely many zeros.

In order to obtain a quantitative version of Pavlov's theorem, we therefore need to prove bounds on the number of zeros of functions from a Gevrey class. This is an interesting problem in complex analysis and is, in fact, the main result of this paper. We also show that, at least in an important special case, our bounds are almost sharp.

Combining Pavlov's ideas with  our results on Gevrey class functions we will be able to obtain an explicit bound on the number of eigenvalues in terms of the parameters controlling the size and variation of the potential. We carry this out in the setting of discrete one-dimensional Schr\"odinger operators or Jacobi matrices, since this is technically slightly simpler. In principle, our methods should also work for continuous, multi-dimensional Schr\"odinger operators. They might also be useful in the spectral theory of other non-selfadjoint operators.


\subsection{Smooth functions analytic in the unit disc}

Consider a class of analytic functions in the unit disc $\bD$ which are smooth up to the boundary. If the class is sufficiently 
small, then it satisfies the so called (analytic) quasianalyticity property: any function from the class with infinitely many zeros vanishes identically. 
More precisely, consider the class of functions $f$ analytic in the unit disc such that 
$$
|\hat f(n)|\le e^{-p_n},\qquad n\ge 0,
$$
where
\begin{equation}
\label{eq:taylor}
f(z)=\sum_{n=0}^\infty\hat f(n) z^n,\qquad z\in \bD,
\end{equation}
and $\{p_n\}$ is a sufficiently regular 
sequence. 
Then the condition
\begin{equation}
\label{pnsum}
\sum_{n=0}^\infty \frac{p_n}{1+n^{3/2}} =\infty
\end{equation}
is necessary and sufficient for this class of analytic functions to be quasianalytic in the sense mentioned above, see \cite{Ca} and \cite{Ko}.

Given a function from an analytic quasianalytic class, it is natural to ask for a quantitative bound on the number of zeros. Of course, to get a meaningful answer, 
we have to impose a normalization like
\begin{equation*}
\label{A}
|f(0)| \ge \exp(-A)\,.
\end{equation*}

In this paper, we deal with an important special case of this question concerning analytic quasianalytic Gevrey classes. 

In what follows we denote by $\mathbb D(z,r)$ the disc centered at $z\in\mathbb C$ of radius $r>0$, $\mathbb D(r)=\mathbb D(0,r)$, 
$\bD=\mathbb D(1)$. 
As usual, $m_2$ denotes planar Lebesgue measure.

We fix $\beta_0> 0$ and consider $\beta\in [0,\beta_0]$. (Thus, we are considering arbitrary $\beta>0$. The sole purpose of the parameter $\beta_0$ is to track the dependence of our constants -- in fact, they will typically only depend on $\beta_0$.) We consider the class $\mathfrak A_\beta$ of functions $f$ analytic in the unit disc and smooth up the boundary 
determined by restrictions of their Taylor coefficients:
\begin{gather}
|\widehat{f}(n)|\le a'_f\exp[-a_f\cdot n^{(1+\beta)/(2+\beta)}],\qquad n\ge 0.\label{fhatn}
\end{gather}
with $\widehat f(n)$ from \eqref{eq:taylor}. We consider this class because in our application to the Jacobi matrices we would like to concentrate on the situations which are close to those  considered by Pavlov and far away from those considered by Na{\u\i}mark. 

This class coincides with the Carleman--Gevrey class 
\begin{multline*}
C_A\{(n!)^{(2+\beta)/(1+\beta)}\}(\mathbb T)\\=
\bigl\{f\in C^\infty_A(\mathbb D):|f^{(n)}(z)|\le b_f^{n+1}(n!)^{(2+\beta)/(1+\beta)},\, n\ge 0,\, z\in\mathbb D\bigr\}.
\end{multline*}

By a theorem of Evsey Dyn'kin, the class $\mathfrak A_\beta$ coincides with the class $\mathfrak C_\beta$ of the planar Cauchy transforms 
of functions $\varphi$ with support in $\mathbb D(2)\setminus \mathbb D$ such that 
\begin{gather}
| \varphi(z)|\le d'_f \rho_\beta(d_f(|z|-1)),\qquad 1<|z|<2,\label{xde4}\\
\rho_\beta(x)=\exp\Bigl(-\frac1{x^{1+\beta}}\Bigr),\qquad x>0\notag,
\end{gather}
with $d_f,d'_f$ depending on $a_f,a'_f$ and $\beta$. For more details, see \cite{Dy1} and Section~\ref{extension}.

It is known (and it follows from the divergence of the corresponding sum \eqref{pnsum}) that the classes $\mathfrak A_\beta$ and $\mathfrak C_\beta$ are analytic quasianalytic. 

In this paper we get an upper bound on the number of zeros of $f$ from such classes in the closed unit disc,  
$N_f={\rm card}\,(Z_f\cap\overline{\mathbb D})$, normalized by the condition $|f(0)|\ge \exp(-A)$, 
in terms of $A$ and $\beta$. 


We formulate our main theorem first for the special case $a_f\asymp1$,  $a'_f\asymp1$ in \eqref{fhatn}, where the statement is somewhat clearer.

\begin{theorem}
\label{main1}
Let $f$ be  in $\mathfrak A_\beta$ with $a_f\asymp1$, $a'_f\asymp1$ 
or, equivalently, in $\mathfrak C_\beta$  with $d_f\asymp1$, $d'_f\asymp1$  
and let $|f(0)|\ge \exp(-A)$ for some $A\ge 1$. 
\begin{enumerate}
\item[(a)] If $\beta=0$, then $N_f\le \exp(c\sqrt{A})$ with some absolute constant $c$.
\item [(b)] If $0<\beta\leq\beta_0$, then
\begin{equation*}
N_f\le 
\begin{cases}
\exp(c\sqrt{A}),\quad & A\le \beta^{-2},\\
A^{(2/\beta)+1}\beta^{(4/\beta)+2}\exp\frac{c}{\beta},\quad & \beta^{-2}\le A\le \beta^{-4},\\
A^{(1/\beta)+1}\exp\frac{c}{\beta},\quad & A\ge \beta^{-4},
\end{cases} 
\end{equation*}
for some absolute constant $c$ depending only on $\beta_0$.
\end{enumerate}
\end{theorem}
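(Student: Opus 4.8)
The natural route is to pass to the pseudoanalytic (Cauchy transform) model. By Dyn'kin's theorem quoted above we may assume $f=\mathcal C\varphi$, where $\varphi$ is supported in $\mathbb{D}(2)\setminus\bD$ and $|\varphi(w)|\le d'_f\,\rho_\beta(d_f(|w|-1))$ with $d_f\asymp1$, $d'_f\asymp1$. Two facts are immediate and will be used throughout: $f$ is analytic in $\bD$ and in $\{|z|>2\}$, and $|f(z)|\lesssim1$ for \emph{all} $z\in\mathbb C$, since $1/|z-w|$ is integrable on $\mathbb D(2)$ and $\|\varphi\|_\infty\lesssim1$. As $f$ is smooth up to $\partial\bD$ and $f\not\equiv0$, it has only finitely many zeros on $\partial\bD$, which are harmless, so it suffices to bound $\card(Z_f\cap\mathbb D(1-\varepsilon))$ uniformly in $\varepsilon>0$.

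The count then splits into an interior part and a boundary layer, separated by a radius $\delta=\delta(A,\beta)$ to be optimized at the end. Zeros of $f$ in $\mathbb D(1-\delta)$ are handled by Jensen's formula on $\mathbb D(1-\delta/2)$: with $|f|\lesssim1$ on the outer circle and $|f(0)|\ge e^{-A}$ this gives at most $O(A/\delta)$ of them. For the zeros in the layer $1-\delta\le|z|<1$ the idea is to cut $\varphi$ at the circle $|w|=1+\delta$: write $\varphi=\varphi_-+\varphi_+$ with $\supp\varphi_-\subset\mathbb D(1+\delta)\setminus\bD$, and set $f_\pm=\mathcal C\varphi_\pm$. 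Then $f_+$ is analytic in $\mathbb D(1+\delta)$ with $|f_+|\lesssim1$ there, while $|f_-|\le C\exp\!\big(-c/\delta^{1+\beta}\big)=:\eta$ everywhere. Requiring $\eta\le\tfrac14 e^{-A}$, i.e. $\delta\lesssim A^{-1/(1+\beta)}$, also yields $|f_+(0)|\ge\tfrac12 e^{-A}$, so $f_+$ is a genuinely analytic function to which Jensen applies, and $\card(Z_{f_+}\cap\mathbb D(1+\tfrac{\delta}{2}))=O(A/\delta)$.

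The heart of the matter is to transfer this control from $f_+$ to $f$ near $\partial\bD$. Because $|f-f_+|\le\eta$, every zero of $f$ in $\overline{\mathbb D}$ lies in the sublevel set $\{|f_+|\le\eta\}$; by Cartan's lemma this set, intersected with $\mathbb D(1+\tfrac{\delta}{2})$, is covered by discs of total radius $\lesssim\delta\,(\eta e^{A})^{1/M}$ with $M=O(A/\delta)$. Inside each such disc one would invoke Rouché on a surrounding circle to match the zeros of $f$ and of $f_+$; this requires $\eta$ to lie below the minimum modulus of $f_+$ on that circle, and that minimum modulus is only $\gtrsim e^{-A}\delta^{\,O(M)}$, the loss coming from propagating the bound $|f_+(0)|\ge e^{-A}$ out to a point near $\partial\bD$ past the intervening zeros of $f_+$. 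Balancing ``$\eta$ small enough'' against this loss fixes the admissible range of $\delta$; optimizing it produces the three regimes of the statement, the polynomial bounds $A^{1/\beta+1}e^{c/\beta}$ and $A^{2/\beta+1}\beta^{4/\beta+2}e^{c/\beta}$ corresponding to the choices $\delta\asymp A^{-1/\beta}$ and an intermediate scale. When $A\lesssim\beta^{-2}$ (in particular always when $\beta=0$) no single scale satisfies both requirements, and one must iterate the splitting on the Cartan discs above, rescaling $f$ on each of them back into the class $\mathfrak A_\beta$; the compounding of the per-scale losses is where the bound $\exp(c\sqrt A)$ originates.

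The step I expect to be the main obstacle is exactly this near-boundary comparison. The function $f_+$ is analytic only in the \emph{thin} disc $\mathbb D(1+\delta)$, so the circle $|z|=1$ on which we must estimate it from below sits at the very rim of its domain of analyticity, where the usual Cartan / minimum-modulus inequalities deteriorate. Quantifying this deterioration sharply, and then arranging the induction over scales — and over the growing exponents in the $\mathfrak C_\beta$-bound that the iterated Cauchy transforms generate — so that the per-scale losses compound to no more than $\exp(c\sqrt A)$ rather than something larger, is the delicate part; it is also what forces the threshold $A\asymp\beta^{-2}$ and the unusual shape of the estimate in the middle regime.
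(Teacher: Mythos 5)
Your proposal is a strategy outline, not a completed argument, and the step you yourself identify as ``the main obstacle'' is exactly where the proof has to be found; as written it is a genuine gap, not a presentational one. The interior count via Jensen and the split $f=f_+ + f_-$ obtained by truncating the Cauchy density $\varphi$ at $|w|=1+\delta$ are both sound, and the requirement $\eta\le \tfrac14 e^{-A}$ giving $\delta\lesssim A^{-1/(1+\beta)}$ is reasonable. But after that you do not derive the three regimes of the statement: you \emph{assert} that optimizing $\delta$ in the Cartan/Rouch\'e balance produces them, and for the crucial case $A\lesssim\beta^{-2}$ (in particular the whole case $\beta=0$, which is where the $\exp(c\sqrt A)$ bound lives and where the theorem is actually hard) you say one ``must iterate the splitting on the Cartan discs'' and that ``the compounding of the per-scale losses is where the bound $\exp(c\sqrt A)$ originates'' — but no iteration is set up, no invariant is tracked through the scales, and no computation is done showing that the accumulated loss is $\exp(c\sqrt A)$ and not, say, $\exp(cA)$. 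Your own heuristic loss estimate $e^{-A}\delta^{O(M)}$ with $M=O(A/\delta)$ is of size $\exp(-O(A)-O((A/\delta)\log(1/\delta)))$, which compounded naively over scales gives no reason to expect a $\sqrt{A}$ in the exponent; extracting $\sqrt{A}$ requires a quantitative scheme, which is precisely what is missing. A secondary issue: when you ``rescale $f$ on each of the Cartan discs back into the class $\mathfrak A_\beta$'', the parameter $d_f$ does not stay $\asymp 1$ under rescaling to a disc of radius $\ll 1$ near the boundary, so the invariant quantity changes from scale to scale and you would in effect need the general Theorem~\ref{main}, not just Theorem~\ref{main1}, to close the induction — this is not addressed.

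For comparison, the paper's argument avoids the Cartan/Rouch\'e comparison altogether. In Section~\ref{localization} one first shows by a pigeonhole argument (using the interior Jensen bound \eqref{inside}) that at least $M\asymp (N/d)^{(1+\beta)/(2+\beta)}$ zeros of $f$ cluster in a boundary box of side $\asymp d^{-1}M^{-1/(1+\beta)}$. Rather than trying to match zeros of $f$ with zeros of an analytic approximant, one uses the Cauchy integral representation $f=\mathcal C\varphi$ together with the clustered zeros $v_1,\dots,v_M$ to factor a polynomial out of the integrand, which yields a direct pointwise smallness bound \eqref{z1} for $|f(e^{it})|$ on an interval adjacent to the cluster. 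That smallness is then propagated outward through dyadic scales by a two-constants (harmonic measure) argument; the key invariant is $I_k=-\int_0^{2^k y_0}\log|g|$, and the recursion \eqref{11}--\eqref{12} shows $I_k\gtrsim k^2$ (resp.\ $R_k\gtrsim k^2$ for $\beta>0$). Matching $I_k$ against the global Jensen budget $Ad$ after $\log_2(d/y_0)\asymp\log(dM^{1/(1+\beta)})$ steps is what produces $\exp(c\sqrt{Ad})$, and the parameter $\min(1/\beta,\log(dM^{1/(1+\beta)}))$ in \eqref{ech2} is exactly what produces the phase transition at $A\asymp d^{-1}\beta^{-2}$. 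The $k^2$ growth, hence the square root in the exponent, comes out of the recursion, not out of an assertion. To make your route work you would have to supply a quantitatively equivalent recursion for the per-scale Cartan/Rouch\'e losses, and at present nothing in the proposal does that.
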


This upper bound has the interesting feature of revealing a certain phase transition. We will also show the (almost) sharpness of our bound for $\beta=0$ in Section \ref{sharp}.

To formulate the main theorem  for the general case we denote
\begin{align*}
d&=a_f^{-\frac{2+\beta}{1+\beta}},
\\
A'&= A+\log(a'_fd^2).
\end{align*}

\begin{theorem}
\label{main}
Let $f$ be in $\mathfrak A_\beta$ and let $|f(0)|\ge \exp(-A)$ with $A'\ge 1$. 
\begin{enumerate}
\item[(a)] If $\beta=0$, then $N_f\le  \frac{c}d \min\{A'd,1\} \exp(c\sqrt{A'd})$
for some absolute constant $c$. 
\item [(b)] If $0<\beta\leq\beta_0$, then
\begin{align*}
&N_f\le \min\biggl(c A'\bigl(1+(A' d^{1+\beta})^{\frac 1\beta} \bigr),\\
&\qquad\qquad\begin{cases}
c d^{-(1+\beta)}\, \exp(c\sqrt{A'd}),\quad & A'\le d^{-1}\beta^{-2},\\
e^{c/\beta} d^{-(1+\beta)} \max\Bigl((A'd^{1+\beta} \beta^2)^{\frac{2+\beta}\beta},1 \Bigl),\quad & A'\ge d^{-1}\beta^{-2}
\end{cases} \biggr)
\end{align*}
for some positive $c$ depending only on $\beta_0$. 
\end{enumerate}
\end{theorem}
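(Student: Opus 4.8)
The plan is to reduce the general case to the normalized case already treated in Theorem~\ref{main1} by a rescaling of the variable, and then to deal separately with the two regimes in which the two candidate bounds inside the outer $\min$ dominate. First I would observe that replacing $f(z)$ by $g(z)=f(z)/f(0)$ only costs a factor $|f(0)|^{-1}\le e^A$ in the coefficient bound, so that $g\in\mathfrak A_\beta$ with $a_g=a_f$ and $a'_g=a'_f e^A$, and $|g(0)|=1$; the zero set is unchanged. Next, the natural rescaling: a function with $|\widehat f(n)|\le a'_f\exp[-a_f n^{(1+\beta)/(2+\beta)}]$ becomes, after the substitution $z\mapsto \rho z$ for a suitable $\rho<1$ (equivalently, after composing the Cauchy-transform picture with a dilation), a function whose coefficient decay constant is normalized to $a\asymp 1$, at the price of shrinking the disc by a factor controlled by $d=a_f^{-(2+\beta)/(1+\beta)}$. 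Working instead through the equivalence with $\mathfrak C_\beta$ from Dyn'kin's theorem is probably cleaner: the parameter $d_f$ plays the role of a length scale for the density $\varphi$ supported in the annulus, and scaling $|z|-1$ by $d_f$ turns $\rho_\beta(d_f(|z|-1))$ into $\rho_\beta(|z|-1)$. One then tracks how $N_f$, which counts zeros in $\overline{\bD}$ rather than in the rescaled disc, picks up the extra factors of $d$ and how the normalization constant $A$ gets replaced by $A'=A+\log(a'_f d^2)$, this being exactly the exponent appearing once the $e^A$ from the first step and the $d^2$-type Jacobian/annulus factors from Dyn'kin's equivalence are combined.

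Once $f$ has been put in normalized form, one feeds the rescaled data into Theorem~\ref{main1} with $A$ there replaced by $A'd$ (the rescaled normalization exponent) and $\beta$ unchanged. In case (a), $\beta=0$: Theorem~\ref{main1}(a) gives $\exp(c\sqrt{A'd})$ zeros in the \emph{rescaled} disc, and pulling back multiplies by at most $c/d$ when $d$ is small (i.e.\ $A'd\le 1$, where the relevant annulus has width $\asymp d$), while for $A'd\gtrsim1$ no such loss occurs; the $\min\{A'd,1\}/d$ prefactor encodes precisely this dichotomy, and a crude direct count (e.g.\ via Jensen on the rescaled disc, controlling $\log\max|f|$ by the coefficient bound) supplies the complementary estimate when $A'd$ is large. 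In case (b), $0<\beta\le\beta_0$, one similarly substitutes $A\leadsto A'd$, $\beta^{-2}\leadsto d^{-1}\beta^{-2}$ in the threshold, and $\beta^{-4}$ likewise, and reads off the three-branch bound of Theorem~\ref{main1}(b); collecting the middle and last branches and re-expressing them in terms of $A'd^{1+\beta}\beta^2$ produces the $\max\bigl((A'd^{1+\beta}\beta^2)^{(2+\beta)/\beta},1\bigr)$ expression, while the $d^{-(1+\beta)}$ prefactor is the pull-back Jacobian (the annulus of width $\asymp d$, raised to the power reflecting the $\beta$-dependent geometry). The first entry of the outer $\min$, $cA'\bigl(1+(A'd^{1+\beta})^{1/\beta}\bigr)$, is the bound that wins when $d$ is \emph{not} small; it should come from a more elementary argument applied directly to $f$ without rescaling --- essentially a Jensen-type count on $\bD(r)$ for an optimized $r<1$, balancing the number of zeros in $\bD(r)$ against the (few) zeros in the thin annulus $\bD\setminus\bD(r)$, the latter handled by the quasianalytic/Gevrey decay exactly as in the proof of Theorem~\ref{main1}.

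The main obstacle, I expect, is bookkeeping the scaling invariants correctly: one must verify that the two length scales in play --- the coefficient-decay scale $d=a_f^{-(2+\beta)/(1+\beta)}$ and the annulus width in the $\mathfrak C_\beta$ picture --- transform consistently under Dyn'kin's equivalence, so that the single parameter $A'$ genuinely absorbs all of $A$, $a'_f$, and $d$; getting the exponent of $d$ in $A'=A+\log(a'_f d^2)$ (rather than $d^1$ or $d^3$) requires care with how the sup-norm of $\varphi$ over an annulus of width $d$ relates to the Taylor coefficients, and how the Cauchy transform over that annulus contributes to $|f(0)|$. A secondary subtlety is checking that the three regimes of Theorem~\ref{main1}(b) collapse, after the substitution $A\leadsto A'd$, into the two written in Theorem~\ref{main}(b): the boundary case $A'd\asymp d^{-2}\beta^{-4}$ (i.e.\ $A'\asymp d^{-3}\beta^{-4}$) must be shown to give matching bounds from both sides, so that merging the last two branches into one $\max(\cdots,1)$ loses nothing beyond constants depending on $\beta_0$. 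Finally, one checks that taking the $\min$ with the elementary bound $cA'(1+(A'd^{1+\beta})^{1/\beta})$ is legitimate --- i.e.\ that this is an independent valid bound, not merely the small-$d$ asymptotics of the second entry --- which follows from running the Jensen argument of Theorem~\ref{main1} directly on $f$ with an optimized radius.
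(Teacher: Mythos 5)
Your proposal rests on a reduction of Theorem~\ref{main} to Theorem~\ref{main1} via a rescaling that normalizes $a_f\asymp1$ (equivalently $d_f\asymp1$), but no such rescaling exists in the form you describe. A dilation $z\mapsto\rho z$ multiplies $\widehat f(n)$ by $\rho^n=e^{n\log\rho}$, which is an \emph{exponential} factor, not a Gevrey one: you cannot absorb a linear-in-$n$ exponent into $a_f n^{(1+\beta)/(2+\beta)}$ to change the constant $a_f$, and applying such a dilation would instead move $f$ into the exponentially-decaying (Na{\u\i}mark) class, which is a completely different regime. Passing to the $\mathfrak C_\beta$ picture does not rescue this: there is no conformal self-map of a neighborhood of $\overline{\mathbb D}$ turning $\rho_\beta(d_f(|z|-1))$ into $\rho_\beta(|z|-1)$ while keeping the zero set in $\overline{\mathbb D}$ comparable. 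The $A\leadsto A'd$ substitution you want does appear in the paper, but it is realized \emph{inside} the proofs of the intermediate estimates, after a conformal map of the disc to a half-plane followed by a genuine affine rescaling there (see Subsection~3.3, where the Jensen bound becomes $\int_0^{C_1d}\log|g|\ge -2Ad$). That half-plane reduction is already part of the proof of Theorem~\ref{main1} and gives nothing extra to bootstrap from; logically, Theorem~\ref{main1} is stated in the paper as a \emph{special case} of Theorem~\ref{main}, both deduced from the same intermediate Theorems~\ref{t1}, \ref{delta01} and \ref{t2}, so trying to derive the general theorem from the special one is circular.

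The paper's actual proof of Theorem~\ref{main} is much shorter and avoids any rescaling of $d_f$ at all. Theorems~\ref{t1}, \ref{delta01}, and \ref{t2} are already formulated for $f\in\mathfrak C_\beta$ with arbitrary $d_f$ and only $d'_f=1$. Given $f\in\mathfrak A_\beta$, one invokes Theorem~\ref{thext_small_af} to land in $\mathfrak C_\beta$ with $d_f=Cd$, $d'_f=C_1 a'_f d^2$, and then simply divides by the scalar $d'_f$ to normalize $d'_f$ to $1$; this keeps $d_f$ unchanged but shifts the normalization exponent $A$ to $\tilde A = A+\log(C_1 a'_f d^2)=A'+\log C_1$, which is where $A'=A+\log(a'_f d^2)$ comes from, and then the bounds of Theorems~\ref{t1}, \ref{delta01}, \ref{t2} are read off directly with $\tilde A$ in place of $A$. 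Your alternative normalization $f\mapsto f/f(0)$ works formally but then puts all of $e^A$ into $a'_f$ and leaves you without a parameter $A$, which changes the shape of what you would need from the intermediate results. Finally, on bookkeeping: if you push the substitution $A\leadsto A'd$ through the three-branch bound in Theorem~\ref{main1}(b) and multiply by $d^{-(1+\beta)}$, the resulting power of $d$ in the middle branch is $d^{2/\beta+2}$ rather than the $d^{2/\beta+1}$ that a naive pull-back would predict, a sign that the heuristic is not merely a matter of inserting factors of $d$ by hand.
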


The proof of Theorem \ref{main} will be given in Subsection \ref{sec:proofmain}.




\subsection[Non-selfadjoint Jacobi matrices]{Non-selfadjoint Jacobi matrices}
\label{Jintro}

Our main application of \newline\noindent Theorem \ref{main} is estimating from above the number of eigenvalues of discrete non-symmetric Schr\"odinger operators, and, more generally, non-symmetric complex Jacobi matrices.

We now formulate our results precisely. We consider Jacobi matrices of the form
$$
J = \begin{pmatrix}
b_0 & c_0 & 0 & \dots & \dots \\
a_0 & b_1 & c_1 & 0 & \dots \\
0 & a_1 & b_2 & c_2 &  \dots\\
 \dots& 0 & a_2 & b_3 & \dots  \\
 \dots&  \dots& \dots & \dots & \dots
\end{pmatrix}
$$
with complex sequences $(a_n)$, $(b_n)$ and $(c_n)$ satisfying the conditions
$$
\lim_{n\to\infty} a_n = \lim_{n\to\infty} c_n = \frac12 \,,
\qquad \lim_{n\to\infty} b_n = 0 \,.
$$

We consider $J$ as an operator in $\ell^2(\mathbb N_0)$. The above conditions on the coefficients imply that the essential spectrum of $J$ is $[-1,1]$ and therefore the spectrum of 
$J$ in $\bC\setminus [-1,1]$ consists of isolated points which are eigenvalues of finite algebraic multiplicity.

Let us assume that the sequences $(a_n-1/2)_{n\ge 0}$, $(b_n)_{n\ge 0}$, $(c_n-1/2)_{n\ge 0}$ are in $\ell^1$. Under this assumption, 
$J-J_0$ is trace class (where $J_0$ is the matrix with $b_n=0$ and $a_n=c_n=1/2$ for all $n$), and therefore 
the perturbation determinant (see \cite{D1,D2,D3})
$$
\Delta(z) := \det \Bigl( \bigl(J-(z+z^{-1})/2\bigr)\bigl(J_0 - (z+z^{-1})/2\bigr)^{-1} \Bigr)
$$
is well-defined. It is known (see, for instance, \cite{GoEg}) that this function is analytic in the unit disc, that
$\Delta(0) = 1$, that for any $z$ with $|z|<1$ one has $\Delta(z)=0$ if and only if $(z+z^{-1})/2$ is an eigenvalue of $J$,  
and that the order of the zero coincides with the algebraic multiplicity of the corresponding eigenvalue.


To reduce our spectral problem to one in complex analysis, we are first interested in the coefficients in the power series expansion of the determinant $\Delta$ at the origin. We write
$$
\Delta(z) = \sum_{j=0}^\infty \delta_j z^j, 
$$
with $\delta_0=1$. 
The following proposition shows that certain bounds on the coefficients of the Jacobi matrix lead to bounds on the Taylor coefficients of $\Delta$.

\begin{prop}\label{jostbound}
Assume that for some $B,D>0$ and $1/2\leq \gamma\leq 1$,
$$
|2b_n| + |4 a_n c_n -1| \le D e^{-B n^\gamma}, \qquad n\ge 0.
\label{X12}
$$
Then 
$$
|\delta_j| \le  D_1\exp( -(B/4) j^\gamma),
\qquad  j\ge1\,,
$$
where $D_1=C_1D(1+B^{-1/\gamma})\exp\bigl(C_2D(1+B^{-2/\gamma})\bigr)$ and $C_1,C_2$ are absolute constants.
\end{prop}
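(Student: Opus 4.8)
The plan is to realise $\Delta$ through a three‑term recursion whose coefficients involve only the combinations $b_n$ and $a_nc_n$, expand the solution in a Volterra (Duhamel) series, and extract the decay of the Taylor coefficients from the degrees of the elementary building blocks.

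First I would reduce the problem to a recursion. Put $E=E(z)=(z+z^{-1})/2$ and let $p_n(z)=\det(E-J_n)$ be the characteristic polynomial of the $n\times n$ upper‑left truncation $J_n$. Expanding along the last row gives $p_n=(E-b_{n-1})p_{n-1}-a_{n-2}c_{n-2}p_{n-2}$, $p_0=1$, $p_1=E-b_0$, in which only $b_k$ and $a_kc_k$ appear -- this is precisely why the hypothesis need not control $a_n$ and $c_n$ separately. By standard identities for Jacobi perturbation determinants one has $\Delta(z)=\lim_n p_n(z)/p_n^{(0)}(z)$ (the superscript $0$ referring to $J_0$), and since $p_n^{(0)}(z)=2^{-n}z^{-n}(1+z^2+\dots+z^{2n})$ the normalised polynomials $\pi_n(z):=2^nz^np_n(z)$ satisfy $\Delta(z)=(1-z^2)\lim_n\pi_n(z)$ and, using $2zE=1+z^2$ and $4z^2a_nc_n=z^2+4z^2\beta_n$ with $\beta_n:=a_nc_n-\tfrac14$,
\[
\pi_n=(1+z^2)\pi_{n-1}-z^2\pi_{n-2}+\eps_n,\qquad
\eps_n:=-2zb_{n-1}\pi_{n-1}-4z^2\beta_{n-2}\pi_{n-2}.
\]
The hypothesis becomes $|2b_{k-1}|+4|\beta_{k-2}|\le\rho_k:=2De^{-B(k-2)_+^\gamma}$, and $\sum_{k\ge1}\rho_k\le CD(1+B^{-1/\gamma})=:\mathcal V$.

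Next, the homogeneous recursion has solutions $1$ and $z^{2n}$ and forward Green's function $\Phi_{n,k}(z)=\sum_{l=0}^{n-k}z^{2l}$, so $\pi_n=\pi^{(0)}_n+\sum_{k\le n}\Phi_{n,k}\eps_k$ with $\pi^{(0)}_n=\sum_{l\le n}z^{2l}$. Iterating this (the Neumann series converges on $\bD$, since a Gronwall bound gives $\sup_n|\pi_n|\le(1-|z|^2)^{-1}e^{\mathcal V/(1-|z|^2)}$) and letting $n\to\infty$, with $(1-z^2)\Phi_{\infty,k}\equiv1$, yields $\Delta=1+\sum_{m\ge1}\Delta^{(m)}$, where $\Delta^{(m)}$ is a sum over chains $1\le k_1<k_2<\dots<k_m$ of $\bigl(\prod_{a=1}^m V_a(z)\bigr)\pi^{(0)}_{k_1-1}(z)\prod_{a=1}^{m-1}\Phi_{K_{a+1},k_a}(z)$ with each $V_a\in\{-2b_{k_a-1}z,\,-4\beta_{k_a-2}z^2\}$ and $K_{a+1}\in\{k_{a+1}-1,k_{a+1}-2\}$. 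The crucial observation is a degree count: every $\Phi$ and $\pi^{(0)}$ is a polynomial in $z^2$ with $\{0,1\}$ coefficients, and $\deg\bigl(\pi^{(0)}_{k_1-1}\prod_a\Phi_{K_{a+1},k_a}\bigr)\le 2(k_m-m)$, while $\prod V_a$ contributes one power $z^e$ with $m\le e\le2m$; hence a degree‑$j$ monomial of $\Delta^{(m)}$ must have $k_m\ge j/2$. Consequently $\prod_a|V_a|$ in such a monomial always carries the factor $\rho_{k_m}$ with $k_m\ge\lceil j/2\rceil$, and a tail estimate $\sum_{k\ge L}\rho_k\le CD(1+B^{-1/\gamma})e^{-\frac12B(L-2)_+^\gamma}$ produces the factor $e^{-(B/4)j^\gamma}$ (for $j$ above an absolute threshold).

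It remains to sum over $m$, and this is where the real work lies. Collecting the above, $|\delta_j^{(m)}|$ is bounded by the extracted quantity times the number of ways of distributing the power $z^j$ among the $V_a$'s, the $\Phi$'s and $\pi^{(0)}$, which is $\le 2^m\binom{m+\lfloor j/2\rfloor-1}{m-1}$, and times a residual chain sum $\le\mathcal V^{m-1}/(m-1)!$. For moderate $m$ the binomial is merely polynomial in $j$ and is absorbed by $e^{-(B/4)j^\gamma}$; for large $m$ the ordering $k_a\ge a$ forces $\sum_{a=1}^m(k_a-2)_+^\gamma\ge c\,m^{1+\gamma}$, so $\prod_a\rho_{k_a}$ decays super‑exponentially in $m$ and dominates all combinatorial factors uniformly in $j$. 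Summing the resulting series in $m$ yields a constant of the form $\exp\bigl(C_2D(1+B^{-2/\gamma})\bigr)$ -- the exponent $B^{-2/\gamma}$ being essentially the square of the $B^{-1/\gamma}$ in $\mathcal V$ -- while for the finitely many small $j$ one uses the crude Cauchy bound $|\delta_j|\le 2^j\sup_{|z|=1/2}|\Delta(z)|\le 2^je^{(4/3)\mathcal V}$; altogether $|\delta_j|\le D_1e^{-(B/4)j^\gamma}$. The hard part, then, is this last step: one has a single factor $e^{-(B/4)j^\gamma}$ to spend against the a priori exponentially‑many (in $j$) ways of allocating the power $z^j$, and one must retain enough residual decay simultaneously in the running indices $k_a$ and in the order $m$ for the multiple series to converge -- this is exactly what pins down the precise shape of $D_1$.
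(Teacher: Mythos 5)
Your plan takes a genuinely different route. The paper proves Proposition~\ref{jostbound} in a few lines: it quotes a known Jost-type estimate (Lemma~\ref{jostabstract}, cited from Teschl and from Golinskii--Egorova),
\[
|\delta_j|\le H(\lfloor j/2\rfloor)\prod_{n\ge0}\bigl(1+H(n)\bigr),\qquad H(N)=\sum_{n\ge N}\bigl(|2b_n|+|4a_nc_n-1|\bigr),
\]
and then combines it with the elementary exponential-tail Lemma~\ref{expsum}: the first factor supplies the $e^{-(B/4)j^\gamma}$ decay, and the product, after $\log(1+x)\le x$ and a second application of the tail estimate, supplies the term $\exp\bigl(C_2D(1+B^{-2/\gamma})\bigr)$. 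Your Volterra expansion of the normalised truncated determinants $\pi_n$, together with the degree count forcing $k_m\ge j/2$, is in effect a re-derivation of that Jost-type lemma from first principles: the constraint $k_m\ge j/2$ manufactures exactly the $H(\lfloor j/2\rfloor)$ factor, and the sum over chains $(k_a)$ and over $m$ plays the role of the product $\prod_n(1+H(n))$. Your setup (the recursion for $\pi_n$, the forward Green's function $\Phi_{n,k}$, the Gronwall bound on $\sup_n|\pi_n|$) is correct, but you replace a one-line citation plus an elementary tail estimate by a substantial combinatorial argument that proves more than is asked.

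The genuine gap is exactly the step you yourself flag as ``the hard part'': the summation over $m$ is asserted, not carried out. For moderate $m$ you claim that $\binom{m+\lfloor j/2\rfloor-1}{m-1}\lesssim j^{m-1}$ is absorbed by the spare decay in $j$, but absorbing $j^{m-1}$ by a factor of the form $e^{-c'Bj^\gamma}$ uniformly in $B>0$ requires a precise threshold on $m$ (which depends on both $B$ and $j$) that you do not give. For large $m$ you invoke $\sum_a(k_a-2)_+^\gamma\gtrsim m^{1+\gamma}$, but the competing combinatorial factor $\binom{m+\lfloor j/2\rfloor-1}{m-1}$ grows like $m^{\lfloor j/2\rfloor}$ once $m\gg j$, and proving that $e^{-cBm^{1+\gamma}}$ beats this \emph{uniformly} in $j$ and $B$ while the total still evaluates to $\exp\bigl(C_2D(1+B^{-2/\gamma})\bigr)$ with $C_2$ absolute requires a careful two-parameter regime split you have not performed; as written this is a programme, not a proof. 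A smaller point: your ``finitely many small $j$'' fallback $|\delta_j|\le 2^j e^{(4/3)\mathcal V}$ cannot be dominated by $D_1e^{-(B/4)j^\gamma}$ uniformly in $B$ for a fixed small $j$ (with $j=1$ fixed and $B\to\infty$ the claimed bound tends to $0$ while $|\delta_1|$ need not); this particular difficulty is also present in the proposition and in the paper's own proof at $j=1$, where $H(\lfloor1/2\rfloor)=H(0)$ carries no decay, so it is not specific to your route, but your argument does not repair it either. For $j\ge2$ both approaches go through once the constant $c$ in the tail lemma is chosen close enough to $1$ (e.g.\ $c=3/4$ suffices, using $\lfloor j/2\rfloor\ge j/3$).
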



The proof of this proposition is given in Section~\ref{coeff}. 

Let $N_J$ be the number of eigenvalues of $J$ in $\mathbb C\setminus[-1,1]$, where eigenvalues are counted with their algebraic multiplicity. 
Given $B$ and $D$ we denote
\begin{align*}
d&=B^{-1/\gamma},\\
A'&=D(1+B^{-2/\gamma})+\log(D(1+B^{-1/\gamma})B^{-2/\gamma}).
\end{align*}

\begin{theorem}\label{mainso}
Let $1/2\le \gamma\leq \gamma_0<1$, and let $J$ be a Jacobi matrix such that
$$ 
|2b_n| + |4 a_n c_n -1| \le D e^{-B n^\gamma}, \qquad n\ge 0,
$$
for some $B,D> 0$. Assume that $A'\geq 1$.
\begin{enumerate}
\item[(a)] If $\gamma=1/2$, then $N_J\le  \frac{c}d\exp(c\sqrt{A'd})$
for some absolute positive constant $c$. 
\item [(b)] If $1/2<\gamma\leq\gamma_0$, then
\begin{align*}
&N_J\le \min\biggl(c A'\bigl(1+(A'^{1-\gamma} d^{\gamma})^{1/(2\gamma-1)} \bigr),\\
&\begin{cases}
c d^{-\frac\gamma{1-\gamma}} \exp(c\sqrt{A'd}),\ & A'\le d^{-1}(2\gamma-1)^{-2},\\
e^{\frac c{2\gamma-1}} d^{-\frac\gamma{1-\gamma}} \max\Bigl((A'd^{\frac\gamma{1-\gamma}} (2\gamma-1)^2)^{\frac 1{2\gamma-1}},1\Bigl),\ & A'\ge d^{-1}(2\gamma-1)^{-2}
\end{cases} \biggr)
\end{align*}
for some positive $c$ depending only on $\gamma_0$. 
\end{enumerate}
\end{theorem}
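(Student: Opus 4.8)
The plan is to reduce Theorem~\ref{mainso} to Theorem~\ref{main} by identifying the perturbation determinant $\Delta$ as an element of the Gevrey class $\mathfrak A_\beta$ for an appropriate value of $\beta$, with parameters $a_\Delta, a'_\Delta$ dictated by $B$ and $D$. First I would match the exponents: the hypothesis gives a bound of the form $|2b_n|+|4a_nc_n-1|\le De^{-Bn^\gamma}$, so by Proposition~\ref{jostbound} the Taylor coefficients of $\Delta$ satisfy $|\delta_j|\le D_1\exp(-(B/4)j^\gamma)$, and since $\delta_0=1$ we may as well absorb the $j=0$ term into the constant $D_1$. Comparing with \eqref{fhatn}, we need $\gamma=(1+\beta)/(2+\beta)$, i.e. $\beta=(2\gamma-1)/(1-\gamma)$; note this is exactly the substitution that turns the various $(2\gamma-1)$ and $\gamma/(1-\gamma)$ appearing in the statement of Theorem~\ref{mainso} into the $\beta$ and $(1+\beta)$ of Theorem~\ref{main}, since $1/\beta=(1-\gamma)/(2\gamma-1)$, $(1+\beta)=\gamma/(1-\gamma)\cdot(1-\gamma)/\gamma\cdot\dots$ — I would carry out this bookkeeping carefully. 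The constraint $1/2\le\gamma\le\gamma_0<1$ corresponds precisely to $0\le\beta\le\beta_0$ with $\beta_0=(2\gamma_0-1)/(1-\gamma_0)$, so the constants ``depending only on $\gamma_0$'' in Theorem~\ref{mainso} come from those ``depending only on $\beta_0$'' in Theorem~\ref{main}.

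Next I would read off $a_\Delta$ and $a'_\Delta$: from $|\delta_j|\le D_1\exp(-(B/4)j^\gamma)$ we take $a_\Delta = (B/4)$ — more precisely, we want $a_\Delta n^{(1+\beta)/(2+\beta)} = a_\Delta n^\gamma$, so $a_\Delta\asymp B$ up to an absolute constant — and $a'_\Delta = D_1$. Then $d = a_\Delta^{-(2+\beta)/(1+\beta)} = a_\Delta^{-1/\gamma}\asymp B^{-1/\gamma}$, which matches the definition of $d$ in the statement of Theorem~\ref{mainso}. Similarly $A' = A + \log(a'_\Delta d^2)$ with the normalization constant $A$ coming from $|\Delta(0)| = 1 = \exp(-0)$, so one can take $A=0$ — but $A'$ in Theorem~\ref{mainso} is defined as $D(1+B^{-2/\gamma})+\log(D(1+B^{-1/\gamma})B^{-2/\gamma})$, so I would check that $\log(a'_\Delta d^2) = \log(D_1 \cdot B^{-2/\gamma})$, and since $D_1 = C_1 D(1+B^{-1/\gamma})\exp(C_2 D(1+B^{-2/\gamma}))$, we get $\log(D_1 B^{-2/\gamma}) = C_2 D(1+B^{-2/\gamma}) + \log(C_1 D(1+B^{-1/\gamma})B^{-2/\gamma})$, which up to the absolute constants $C_1,C_2$ is exactly the $A'$ of Theorem~\ref{mainso}. (Since $A'\ge1$ is assumed, absorbing absolute multiplicative constants into $A'$ only affects the final bound by an absolute constant, which can be absorbed into $c$.)

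Finally, the number of eigenvalues: $N_J$ counts eigenvalues of $J$ in $\mathbb C\setminus[-1,1]$ with algebraic multiplicity, and by the cited properties of the perturbation determinant, $z\mapsto(z+z^{-1})/2$ maps $\bD\setminus\{0\}$ bijectively onto $\mathbb C\setminus[-1,1]$, with $\Delta(z)=0$ iff $(z+z^{-1})/2$ is an eigenvalue and the order of the zero equal to the algebraic multiplicity. Hence $N_J = N_\Delta = \operatorname{card}(Z_\Delta\cap\overline{\bD})$ (there is no zero at $z=0$ since $\Delta(0)=1$, and zeros on $|z|=1$ would correspond to eigenvalues embedded in $[-1,1]$, handled by the closed-disc count in Theorem~\ref{main}). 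Applying Theorem~\ref{main} with these identifications and translating $\beta$ back to $\gamma$ via $\beta=(2\gamma-1)/(1-\gamma)$ in each of the three regimes yields exactly the bounds claimed: the case $\gamma=1/2$ is $\beta=0$, giving $N_J\le (c/d)\min\{A'd,1\}\exp(c\sqrt{A'd})\le(c/d)\exp(c\sqrt{A'd})$; and for $1/2<\gamma\le\gamma_0$ the two branches of the min and the two cases of the second argument transcribe directly once one notes $d^{-(1+\beta)}=d^{-\gamma/(1-\gamma)}$, $1/\beta=(1-\gamma)/(2\gamma-1)$, $(A'd^{1+\beta}\beta^2)^{(2+\beta)/\beta}$ with $(2+\beta)/\beta = \gamma/(2\gamma-1)$ — wait, one must double-check this last exponent: $(2+\beta)/\beta$ versus the $1/(2\gamma-1)$ in the statement; I would verify $(2+\beta)/\beta = 1/(2\gamma-1)$ fails and instead the correct reading uses $(A'd^{1+\beta})^{1/\beta}$, matching $(A'^{1-\gamma}d^\gamma)^{1/(2\gamma-1)}$ in the first argument of the min after substituting. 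The main obstacle, and the only real work, is this exponent bookkeeping: making sure every power of $\beta$, $(1+\beta)$, $(2+\beta)$ in Theorem~\ref{main} maps to the correct power of $\gamma$, $(1-\gamma)$, $(2\gamma-1)$ under $\beta=(2\gamma-1)/(1-\gamma)$, and that the definition of $A'$ in Proposition~\ref{jostbound}'s consequence genuinely agrees with the $A'$ in Theorem~\ref{mainso} up to absolute constants — everything else is a direct citation.
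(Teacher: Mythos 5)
Your reduction is exactly the paper's: apply Theorem~\ref{main} to $f=\Delta$ with $\gamma=(1+\beta)/(2+\beta)$, equivalently $\beta=(2\gamma-1)/(1-\gamma)$, using Proposition~\ref{jostbound} for the Taylor-coefficient bounds. The paper's own proof is precisely this one-line citation plus a remark that the $A'$ coming out of Proposition~\ref{jostbound} and Theorem~\ref{main} agrees with the one in the statement up to absolute constants.

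One correction to your exponent bookkeeping: the identity $(2+\beta)/\beta = 1/(2\gamma-1)$ does \emph{not} fail — it holds. From $\gamma=(1+\beta)/(2+\beta)$ one gets $2\gamma-1 = \beta/(2+\beta)$, hence $(2+\beta)/\beta = 1/(2\gamma-1)$. So the second branch of the min in Theorem~\ref{main}(b), namely $(A'd^{1+\beta}\beta^2)^{(2+\beta)/\beta}$, transcribes directly to $(A'd^{\gamma/(1-\gamma)}(2\gamma-1)^2)^{1/(2\gamma-1)}$, up to the bounded (in terms of $\gamma_0$) discrepancy between $\beta^2=(2\gamma-1)^2/(1-\gamma)^2$ and $(2\gamma-1)^2$, which is absorbed into $e^{c/(2\gamma-1)}$. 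Your match of the \emph{first} argument of the min via $1/\beta=(1-\gamma)/(2\gamma-1)$ and $1+\beta=\gamma/(1-\gamma)$, giving $(A'd^{1+\beta})^{1/\beta}=(A'^{1-\gamma}d^\gamma)^{1/(2\gamma-1)}$, is also correct; you appear to have momentarily conflated the two exponents. Similarly, the threshold $d^{-1}\beta^{-2}=d^{-1}(1-\gamma)^2(2\gamma-1)^{-2}$ differs from $d^{-1}(2\gamma-1)^{-2}$ by a factor bounded in terms of $\gamma_0$, again absorbed into constants. The rest of your argument — $d\asymp B^{-1/\gamma}$, reading $A'$ off Proposition~\ref{jostbound} with $A=0$ from $\Delta(0)=1$, and identifying $N_J$ with the number of zeros of $\Delta$ in $\overline{\mathbb D}$ — matches the paper.
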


This theorem follows immediately from Theorem \ref{main}, applied to $f=\Delta$ and $\gamma=(1+\beta)/(2+\beta)$, and taking into account the bounds from Proposition \ref{jostbound}. (More precisely, the constant $A'$ provided by Theorem \ref{main} differs from the constant $A'$ above by some absolute constants depending only on $\beta$. The fact that these constants can be omitted follows as in the proof of Theorem \ref{main}.)

It is worth singling out the following special case where $B=1$. This gives a bound on the growth of the number of eigenvalues in the strong coupling limit.

\begin{corollary}\label{mainsocor}
Let $1/2\le \gamma<1$, and let $J$ be a Jacobi matrix such that
$$ 
|2b_n| + |4 a_n c_n -1| \le D e^{-n^\gamma}, \qquad n\ge 0,
$$
for some $D\geq 1$.
\begin{enumerate}
\item[(a)] If $\gamma=1/2$, then $N_J\le  \exp(c\sqrt{D})$
for some absolute positive constant $c$.
\item [(b)] If $1/2<\gamma<1$, then
$
N_J\le c_\gamma D^{\gamma/(2\gamma-1)}$
for a constant $c_\gamma$ depending only on $\gamma$. 
\end{enumerate}
\end{corollary}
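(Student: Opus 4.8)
The plan is to read off Corollary~\ref{mainsocor} from Theorem~\ref{mainso} by specializing to $B=1$. The first step is to evaluate the two auxiliary quantities that enter Theorem~\ref{mainso} at $B=1$: since $B^{-1/\gamma}=B^{-2/\gamma}=1$, one gets $d=1$ and
\[
A'=D\bigl(1+B^{-2/\gamma}\bigr)+\log\bigl(D(1+B^{-1/\gamma})B^{-2/\gamma}\bigr)=2D+\log(2D).
\]
From $D\ge1$ we have $0<\log(2D)<2D$, hence $2D\le A'\le 4D$; in particular $A'\ge 1$, so Theorem~\ref{mainso} applies, and $A'\asymp D$ with absolute constants. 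After this, all that remains is to simplify the bounds of Theorem~\ref{mainso} using $d=1$ and $A'\asymp D$.

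For part~(a), where $\gamma=1/2$, Theorem~\ref{mainso}(a) gives $N_J\le \frac{c}{d}\exp(c\sqrt{A'd})=c\exp(c\sqrt{A'})\le c\exp(2c\sqrt D)$. Since $D\ge1$ forces $\sqrt D\ge1$, the multiplicative constant $c$ can be absorbed into the exponent, yielding a bound of the claimed form $\exp(c'\sqrt D)$ with an absolute constant $c'$.

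For part~(b), fix $\gamma\in(1/2,1)$ and apply Theorem~\ref{mainso}(b) with $\gamma_0=\gamma$, so that its constant $c$ depends only on $\gamma$. I would keep only the first entry of the minimum appearing there, which already bounds $N_J$ with no side condition; with $d=1$ it equals $cA'\bigl(1+(A'^{1-\gamma})^{1/(2\gamma-1)}\bigr)$. Using $A'\ge1$ to estimate $1+A'^{(1-\gamma)/(2\gamma-1)}\le 2A'^{(1-\gamma)/(2\gamma-1)}$ and the elementary identity $1+(1-\gamma)/(2\gamma-1)=\gamma/(2\gamma-1)$, this is at most $2cA'^{\gamma/(2\gamma-1)}$; since the exponent is positive and $A'\le 4D$, we conclude $N_J\le 2c\,4^{\gamma/(2\gamma-1)}D^{\gamma/(2\gamma-1)}=:c_\gamma D^{\gamma/(2\gamma-1)}$, with $c_\gamma$ depending only on $\gamma$.

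I do not expect any genuine obstacle here: the corollary is a routine specialization. The only points that need a little care are checking that $A'\ge1$ so that Theorem~\ref{mainso} is applicable, observing that the hypothesis $D\ge1$ is precisely what keeps the logarithmic term in $A'$ from spoiling the comparison $A'\asymp D$, the absorption of multiplicative constants into the exponential in part~(a), and the short exponent computation in part~(b). Should one wish to avoid invoking Theorem~\ref{mainso}, the same estimates can equally be obtained by carrying $B=1$ through the proof of Theorem~\ref{main}.
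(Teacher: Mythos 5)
Your proof is correct and is exactly the paper's intended argument: the corollary is stated as the special case $B=1$ of Theorem~\ref{mainso}, and you have supplied the routine bookkeeping ($d=1$, $A'=2D+\log(2D)$, $2D\le A'\le 4D$ so $A'\ge 1$ and $A'\asymp D$) that the paper leaves implicit. The absorption of the multiplicative constant into the exponential in part~(a) using $\sqrt D\ge 1$, and the exponent identity $1+\frac{1-\gamma}{2\gamma-1}=\frac{\gamma}{2\gamma-1}$ in part~(b), are both handled properly.
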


For comparison purposes we note that if $|2b_n| + |4 a_n c_n -1| \le D e^{-n}$, $n\ge 0$, then a simple application of Jensen's inequality to $\Delta$ gives the bound $N_J \leq c D$.


\subsection{Plan of the paper}

In Section~\ref{cruder} we give our first estimate on the number of zeros in analytic quasianalytic classes which works for $\beta$ away from $0$. 
Another estimate using a propagation of smallness technique and demonstrating a phase transition is given in Section~\ref{localization}. 
Theorem~\ref{main} (and its special case, Theorem~\ref{main1}) follow from Theorems~\ref{t1}, \ref{t2}, and \ref{thext_small_af}. 
Section~\ref{sharp} is devoted to the sharpness of our estimate in the case $\beta=0$.  
The proof of Proposition~\ref{jostbound} (a Jost type estimate) is contained in Section~\ref{coeff}.
Finally, in Section~\ref{extension}, for the sake of completeness, we give a variant of Dyn'kin construction to establish the equality  
$\mathfrak A_\beta= \mathfrak C_\beta$, $\beta\ge 0$. 
\medskip


\section{First estimate for \text{$\beta>0$}}
\label{cruder}

In this section we present our first method of estimating the number of zeros of functions in analytic quasianalytic Carleman classes 
$\mathfrak C_\beta$. It works only for $\beta>0$, and for a large set of parameters $\beta,A$ it gives results weaker than that
in Section~\ref{localization}. In particular, it does not allow to see the {\it phase transition} of Theorem \ref{main}, when $\beta$ becomes very small with respect to $A$. On the other hand, this method is somewhat simpler than that in Section~\ref{localization}.

Let $0<\beta\le \beta_0$. Suppose that $f\in \mathfrak C_\beta$ with $d_f=d$, $d'_f=1$, that is,
$$
|\bar{\pd} f(z)| \le \rho_\beta(d(|z|-1))\,,
$$
and that $|f(0)|\ge \exp(-A)$ for some $A\ge 1$.

We first note that $f$ is bounded by a univeral constant,
\begin{equation}
\label{eq:fbounded}
|f(z)| \leq 2\sqrt{3} \,,
\qquad z\in \mathbb D(2) \,.
\end{equation}
To see this, we write, using Green's formula,
$$
f(z) =\frac1{\pi} \int_{\mathbb D(2)\setminus \mathbb D}\frac{\bar\pd f(\zeta)}{z-\zeta} \,dm_2(\zeta)\,.
$$
Thus,
\begin{align*}
|f(z)| & \leq \frac1{\pi} \int_{\mathbb D(2)\setminus \mathbb D}\frac{\rho_\beta(d_f(|\zeta|-1))}{|z-\zeta|} \,dm_2(\zeta) \leq \frac1{\pi} \int_{\mathbb D(2)\setminus \mathbb D}\frac{1}{|z-\zeta|} \,dm_2(\zeta) \\
& \leq \frac1{\pi} \int_{\mathbb D(\sqrt 3)}\frac{1}{|\zeta|} \,dm_2(\zeta) = 2\sqrt 3 \,,
\end{align*}
where we used a simple rearrangement inequality and the fact that $\mathbb D(\sqrt 3)$ has the same area as $\mathbb D(2)\setminus \mathbb D$.

\subsection{$\bar\pd$-balayage}
\label{pd}
Consider the closed set
$$
K:=\bigl\{z\in\overline{\mathbb D(2)} \setminus \mathbb D:  |f(z)| \le \rho_\beta (d(|z|-1))\bigr\}\,.
$$

Let $0<\eps\le 1$ and let $\Omega$ be the connected component of $\mathbb D(1+\eps) \setminus K$ containing the origin.

We wish to make $f$ analytic in $\Omega$ by only slightly correcting it. To this end we introduce
\begin{equation}
\label{Feg}
F:= f e^g
\end{equation}
in such a way that
$$
\bar\pd g = -\frac{\bar \pd f}{f}
$$
on $\Omega$. Here we have $\bar\pd g =0$ if $\bar \pd f=0$ (in particular on the whole unit disc).

Notice that on $\Omega$ we have by definition $|\frac{\bar \pd f}{f}|\le 1$. So we can find a solution $g$ by the formula
$$
g(z)= -\frac1{\pi} \int_{\Omega} \frac{\bar \pd f(\zeta)}{f(\zeta)}\frac1{z-\zeta} \,dm_2(\zeta)\,.
$$
Then we have
\begin{align*}
|g(z)| & \leq \frac1{\pi} \int_{\Omega} \frac1{|z-\zeta|} \,dm_2(\zeta) \leq \frac1{\pi} \int_{\mathbb D(2)} \frac1{|z-\zeta|} \,dm_2(\zeta) \\
& \leq \frac1{\pi} \int_{\mathbb D(2)} \frac1{|\zeta|} \,dm_2(\zeta) = 4 
\end{align*}
and, hence,
$$
e^{-4} \le |e^g| \le e^{4}\,,
$$
and 
\begin{equation}
\label{F}
e^{-4} |f|\le |F|\le e^{4}|f|\,
\end{equation}
on $\mathbb D(1+\eps)$.

From now on we work only with $F$. It satisfies \eqref{F}, is analytic in $\Omega\supset \bD$, and has exactly the same zeros as $f$ in 
$\overline{\mathbb D} $, see \eqref{Feg}.
Let us list them:
$$
z_1,\dots, z_N\,,
$$
with $N=N_f$.

\subsection{Harmonic measure on $\Omega$}
\label{hm}

Without loss of generality we can assume that $\Omega$ is regular for the Dirichlet problem. Otherwise, just extend slightly $K$ (diminish $\Omega$) and all the rest will work.

Consider the following function $u_\Omega$ on $\Omega$,
$$
u_\Omega = \log|F| +\sum_{k=1}^N G_\Omega (z_k, \cdot)\,,
$$
where $G_\Omega$ is the Green's function for $\Omega$.
It is harmonic on $\Omega$ since the logarithmic singularities of the first term in the right-hand side are compensated by the second one. It is bounded from above by 
a uniform constant on $\partial\Omega$, and, hence, on $\Omega$.
Applying the mean value theorem in $\Omega$, we obtain that 
\begin{equation}
\label{eq:meanvalue1}
\int_{\pd\Omega} u_\Omega(\zeta) \,d \om_\Omega(\zeta) =  \log|F(0)| +\sum_{k=1}^N G_\Omega (z_k, 0)\,,
\end{equation}
where $\om_\Omega$ denotes the harmonic measure on $\Omega$, evaluated at the point $0$. Furthermore, 
$$
\sum_{k=1}^N G_\Omega (z_k, \zeta)=0,\qquad \zeta\in \pd\Omega\,.
$$
Therefore, by \eqref{eq:meanvalue1},
$$
\int_{\pd\Omega} \log |F(\zeta)| \,d \om_\Omega(\zeta)= \log|F(0)| +\sum_{k=1}^N G_\Omega (z_k, 0)\,,
$$
and
\begin{equation}
\label{1F}
\int_{\mathbb D(1+\eps)\cap \pd\Omega } \log \frac{1}{|F(\zeta)|} \,d \om_{\Omega }(\zeta)+\sum_{k=1}^N G_{\Omega } (z_k, 0) \le 
C+A\,
\end{equation}
for some absolute constant $C$.


Indeed, by \eqref{eq:fbounded} and \eqref{F}, the function $|F|$ is bounded from above by an absolute constant and therefore the integral of $\log|F(\zeta)|$ 
over $\pd\Omega \setminus \mathbb D(1+\eps)$ can be majorized by some absolute constant $C$. 

We have 
$$
|F(\zeta)|\le e^4 e^{-1/(d(|\zeta|-1))^{1+\beta}}, \qquad \zeta\in \mathbb D(1+\eps)\cap \pd\Omega .
$$
Therefore, \eqref{1F} gives us that 
\begin{equation}
\label{1F4}
\int_{\mathbb D(1+\eps)\cap \pd\Omega }\frac{d\om_{\Omega }(\zeta)}{(|\zeta|-1)^{1+\beta} } \le d^{1+\beta}(4+C+A)
\le C_1\, d^{1+\beta} A \,,
\end{equation}
and
\begin{equation}
\label{X15}
\sum_{k=1}^N G_{\Omega } (z_k, 0) \le 4+C+A\le C_1\,A\,,
\end{equation}
for some absolute constant $C_1$. (Here we used $A\geq 1$.)

These estimates will be important to complete the proof. However, first we need to establish some simple estimates on the Green's function in 
$\mathbb D(1+\eps)$ and $\Omega $.

\subsection{Green's function of $\Omega $ and conclusion}
\label{Green}

Let us write yet another mean value theorem.
\begin{multline}
\label{G1}
G_{\Omega }(z, 0) \\= G_{\mathbb D(1+\eps)}(z, 0) - \int_{\pd\Omega \setminus\partial \mathbb D(1+\eps)} G_{\mathbb D(1+\eps)}(z,\zeta) 
\,d\om_{\Omega }(\zeta),\qquad z\in\Omega \,.
\end{multline}
In fact, the function $w\mapsto G_{\mathbb D(1+\eps)}(z, w) - G_{\Omega }(z, w)$ is harmonic in $\Omega $ and has the boundary values $G_{\mathbb D(1+\eps)}(z,\zeta) $, $\zeta\in \pd\Omega $; furthermore, $G_{\mathbb D(1+\eps)}(z,\zeta)=0$, $\zeta\in\partial \mathbb D(1+\eps)$.

We will now estimate the first term on the right side of \eqref{G1} from below and show that it is larger than the second term.
Since
$$
G_{\mathbb D(1+\eps)}(z, \zeta)=\log\Bigl| \frac{(1+\eps)-z\bar \zeta/(1+\eps)}{z-\zeta}\Bigr|,
$$
we have 
\begin{equation}
\label{X16}
G_{\mathbb D(1+\eps)}(z, 0) = \log\frac{1+\eps}{|z|} \ge \log(1+\eps) \ge \frac\eps2,\qquad z\in\mathbb D.
\end{equation}
We claim that
\begin{equation}
\label{G2}
G_{\mathbb D(1+\eps)}(z,\zeta)  \le \log\frac{2\eps}{|\zeta|-1},\qquad z\in\mathbb D,\,\zeta\in  \mathbb D(1+\eps)\setminus \mathbb D.
\end{equation}
Indeed, let $s\in[0,\eps)$ and $\zeta\in  \partial\mathbb D(1+s)$, then, by the maximum principle,
\begin{gather*}
\sup_{z\in\mathbb D}G_{\mathbb D(1+\eps)}(z,\zeta)=\sup_{z\in\mathbb T}\log\Bigl| \frac{(1+\eps)-z\bar \zeta/(1+\eps)}{1-z\bar \zeta}\Bigr| 
\\= \sup_{|w|=s+1}\log\Bigl| \frac{(1+\eps)-w/(1+\eps)}{1-w}\Bigr| .
\end{gather*}
We compute
\begin{align*}
\Bigl| \frac{(1+\eps)-w/(1+\eps)}{1-w}\Bigr|^2
& = \frac{1}{(1+\eps)^2} \frac{(1+\eps)^4 - 2(1+\eps)^2\Re w + |w|^2}{1- 2\Re w + |w|^2} \\
& = 1+\frac{1}{(1+\eps)^2}  \frac{ \left((1+\eps)^2- |w|^2\right) \left( (1+\eps)^2 -1 \right)}{1- 2\Re w + |w|^2} .
\end{align*}
Among $w\in \partial\mathbb D(1+s)$, this is clearly maximized at $w=1+s$, and therefore
$$
\sup_{z\in\mathbb D}G_{\mathbb D(1+\eps)}(z,\zeta) = \log \frac{(1+\eps)-(1+s)/(1+\eps)}{s}
$$
Bounding $(1+\eps)-(1+s)/(1+\eps)\le 2\eps$ for $s<\eps$ we obtain \eqref{G2}.

By \eqref{1F4} and \eqref{G2} we obtain that 
\begin{multline*}
\int_{\mathbb D(1+\eps)\cap \pd\Omega } G_{\mathbb D(1+\eps)}(z, \zeta) \,d\om_{\Omega }(\zeta) \\ \le 
C_1Ad^{1+\beta}\sup_{0<t< \eps}t^{1+\beta}\log\frac{2\eps}{t}\le C_2 A d^{1+\beta}\eps^{1+\beta},\qquad z\in\mathbb D,
\end{multline*}
where $C_2$ depends only on $\beta_0$.

Now we fix 
$$
\eps=\min\Bigl\{1,\frac1{4C_2 A d^{1+\beta}}\Bigr\}^{1/\beta}.
$$ 
and obtain 
that
$$
\int_{\mathbb D(1+\eps)\cap \pd\Omega } G_{\mathbb D(1+\eps)}(z, \zeta) \,d\om_{\Omega }(\zeta) \le \frac\eps4.
$$

Combining this estimate with \eqref{X16} we now obtain from \eqref{G1} that
$$
G_{\Omega }(z, 0)\ge \frac\eps4,\qquad z\in\mathbb D.
$$

By \eqref{X15} we conclude that $N_f\le  4C_1A\eps^{-1}$. Thus we have

\begin{theorem}\label{t1} Let $0<\beta\le \beta_0$, $f\in \mathfrak C_\beta$, $d'_f=1$, $|f(0)|\ge \exp(-A)$ for some $A\ge 1$. 
Then for some positive number $c$ depending only on $\beta_0$ we have 
\begin{equation}
\label{largeA_eq}
N_f\le   c A\bigl(1+A^{1/\beta}d_f^{(1+\beta)/\beta}\bigr).
\end{equation}
\end{theorem}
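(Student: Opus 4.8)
The plan is to follow exactly the chain of estimates that the excerpt has been setting up, so the ``proof'' of Theorem~\ref{t1} amounts to assembling the pieces and carrying out the final bookkeeping. First, I would recall the two key inequalities already established on the component $\Omega$ of $\mathbb{D}(1+\eps)\setminus K$ containing the origin, namely the harmonic-measure bound \eqref{1F4}, which says $\int_{\mathbb{D}(1+\eps)\cap\pd\Omega}(|\zeta|-1)^{-(1+\beta)}\,d\om_\Omega(\zeta)\le C_1 d^{1+\beta}A$, and the Green-mass bound \eqref{X15}, which says $\sum_{k=1}^N G_\Omega(z_k,0)\le C_1 A$. The goal is to convert \eqref{X15} into an upper bound on $N=N_f$, and for that one needs a lower bound $G_\Omega(z_k,0)\ge \eps/4$ valid for all zeros $z_k\in\overline{\mathbb{D}}$, since then $N\cdot(\eps/4)\le C_1 A$, i.e. $N\le 4C_1 A\eps^{-1}$.

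Next I would establish that lower bound on the Green's function via the mean-value identity \eqref{G1}, which expresses $G_\Omega(z,0)$ as $G_{\mathbb{D}(1+\eps)}(z,0)$ minus the $\om_\Omega$-integral of $G_{\mathbb{D}(1+\eps)}(z,\cdot)$ over $\pd\Omega\setminus\pd\mathbb{D}(1+\eps)$. The first term is bounded below by $\eps/2$ for $z\in\mathbb{D}$ by \eqref{X16}. For the subtracted term, I would use the pointwise bound \eqref{G2}, $G_{\mathbb{D}(1+\eps)}(z,\zeta)\le\log\frac{2\eps}{|\zeta|-1}$, and combine it with \eqref{1F4}: writing $\log\frac{2\eps}{|\zeta|-1}=(|\zeta|-1)^{-(1+\beta)}\cdot(|\zeta|-1)^{1+\beta}\log\frac{2\eps}{|\zeta|-1}$ and bounding the last factor by $\sup_{0<t<\eps}t^{1+\beta}\log\frac{2\eps}{t}\le C_2\eps^{1+\beta}$ (with $C_2=C_2(\beta_0)$, since $x^{1+\beta}\log(2/x)$ is uniformly bounded on $(0,1)$ for $\beta\in[0,\beta_0]$), one gets that the subtracted integral is at most $C_2 A d^{1+\beta}\eps^{1+\beta}$. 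Choosing $\eps=\min\{1,(4C_2 A d^{1+\beta})^{-1}\}^{1/\beta}$ makes this at most $\eps/4$, whence $G_\Omega(z,0)\ge\eps/2-\eps/4=\eps/4$ for all $z\in\mathbb{D}$, and in particular at every zero $z_k$ (which lies in $\overline{\mathbb{D}}$; the boundary case $|z_k|=1$ is handled by continuity, or by noting zeros in the open disc suffice after a harmless perturbation).

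Finally I would substitute back: $N_f\le 4C_1 A\eps^{-1}=4C_1 A\max\{1,(4C_2 A d^{1+\beta})^{1/\beta}\}\le c\,A(1+A^{1/\beta}d_f^{(1+\beta)/\beta})$ for a constant $c$ depending only on $\beta_0$, which is precisely \eqref{largeA_eq}. I do not anticipate a genuine obstacle here, since all the analytic work — the $\bar\pd$-balayage producing $F$ with \eqref{F}, the two mean-value theorems, and the harmonic-measure estimates — has already been done in the preceding subsections; the only points requiring a little care are the uniformity in $\beta$ of the constant $C_2$ (controlled because $\beta$ ranges over the compact interval $[0,\beta_0]$, so $\sup_t t^{1+\beta}\log(2\eps/t)$ is $\eps^{1+\beta}$ times a $\beta_0$-dependent constant) and the clean packaging of the two-case minimum $\eps=\min\{1,\cdot\}^{1/\beta}$ into the single product form $1+A^{1/\beta}d_f^{(1+\beta)/\beta}$.
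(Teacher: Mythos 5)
Your proposal reproduces the paper's own argument essentially verbatim: you recall the harmonic-measure bound \eqref{1F4} and the Green-mass bound \eqref{X15}, establish the lower bound $G_\Omega(z_k,0)\ge\eps/4$ via \eqref{G1}, \eqref{X16}, \eqref{G2} and the sup-estimate $\sup_{0<t<\eps}t^{1+\beta}\log\tfrac{2\eps}{t}\le C_2\eps^{1+\beta}$, choose $\eps=\min\{1,(4C_2Ad^{1+\beta})^{-1}\}^{1/\beta}$, and conclude $N_f\le 4C_1A\eps^{-1}$, which is exactly how the paper finishes Section~\ref{Green}. The only cosmetic difference is that you spell out the factorization $\log\tfrac{2\eps}{|\zeta|-1}=(|\zeta|-1)^{-(1+\beta)}\cdot(|\zeta|-1)^{1+\beta}\log\tfrac{2\eps}{|\zeta|-1}$ and the continuity argument for zeros on $\partial\mathbb D$, neither of which changes the substance.
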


If $A d_f^{1+\beta} \le 1$, then the estimate \eqref{largeA_eq} is optimal.
On the other hand, if $A d_f^{1+\beta}>1$, then this estimate becomes bad when $\beta\to 0$. To improve it we use 
a more complicated argument in the next section.

\begin{remark}\label{remb0}
The same proof shows that there are positive numbers $c_1,c_2$ such that if $f\in \mathfrak C_0$, $d'_f=1$, $|f(0)|\ge \exp(-A)$ for some $A\ge 1$ and $A d_f\leq c_1$ then 
\begin{equation}
\label{largeA_eq0}
N_f\le   c_2 A \,.
\end{equation}
(Indeed, we can choose $\epsilon=1$ in the above proof and $c_1 = 1/(4C_2)$.) In the next section we will also prove a bound valid without restriction on $Ad_f$, but for small $Ad_f$ the above bound is better.
\end{remark}

\section{Propagation of smallness estimates} 
\label{localization}

Let $0\le \beta\le \beta_0$. Suppose that $f$ is in $\mathfrak C_\beta$ with $d_f=d$, $d'_f=1$, and that $|f(0)|\ge \exp(-A)$ for some $A\ge 1$. 
In this section, we are going to get an upper bound on the number $N=N_f$ of zeros of $f$ in $\overline{\mathbb D}$, 
in terms of $A,d$, and $\beta$, using a propagation of smallness argument applied earlier in a similar way in an analytic 
non-quasianalytic situation in \cite{AA93}. 

\subsection{Imposing additional assumptions}

In the following we will suppose that
\begin{equation}
N\ge C'dA^{(2+\beta)/(1+\beta)},
\label{och1}
\end{equation}
for a large positive number $C'$ depending only on $\beta_0$
and set 
\begin{equation}
M=\lfloor (N/d)^{(1+\beta)/(2+\beta)} \rfloor,
\label{och2}
\end{equation}
where $\lfloor x\rfloor$ is the integer part of a real number $x$. Note that assumption \eqref{och1}, $A\geq 1$ and $C'\geq 2$ imply that $N/d \geq 2$ and therefore
\begin{equation}
\label{eq:comparisonm}
\frac12 (N/d)^{(1+\beta)/(2+\beta)} \leq M \leq (N/d)^{(1+\beta)/(2+\beta)}
\end{equation}

Furthermore, assume that 
\begin{equation}
C'd\ge C_3M^{-1/(1+\beta)},
\label{ech3}
\end{equation}
where $C_3$ is a large positive number to be chosen later on, depending only on $\beta_0$. In particular, we choose $C_3\geq C'$ and then $dM^{1/(1+\beta)}\ge 1$.

Our main arguments will require the additional assumptions \eqref{och1} and \eqref{ech3}. Before presenting them, however, we derive some simple bounds when these assumptions fail. Indeed, if \eqref{och1} is still in place, but \eqref{ech3} fails, then
$$
M < \left( \frac{C_3}{C'} \right)^{1+\beta} d^{-(1+\beta)}
$$
and by \eqref{eq:comparisonm} (which uses \eqref{och1}) we see
\begin{equation}
\label{eq:extra1}
N \leq 2^{(2+\beta)/(1+\beta)} \left( \frac{C_3}{C'} \right)^{2+\beta} d^{-(1+\beta)} \,.
\end{equation}
On the other hand, if \eqref{och1} fails, then
\begin{equation}
\label{eq:extra2}
N < C' d A^{(2+\beta)/(1+\beta)} \,.
\end{equation}


\subsection{Beginning of the argument}

From now on, our arguments use the assumptions \eqref{och1} and \eqref{ech3}.

It will also be convenient to assume that
\begin{equation}
\label{eq:normalizationf}
|f|\leq 1
\qquad \text{on}\ \mathbb D(2) \,.
\end{equation}
In view of \eqref{eq:fbounded} this can be achieved by dividing $f$ by a universal constant (in fact, by $2\sqrt 3$). This does not alter $d=d_f$ and we may still assume that $d_f'$. On the other hand, $A$ is replaced by $A + \ln (2\sqrt 3)$. Since $A\geq 1$, we have $A+\ln(2\sqrt 3)\leq (1+\ln(2\sqrt 3))A$ and therefore the replacement of $A$ only affects the constants, but does not affect the form of our bounds. Therefore, in the following without loss of generality we assume \eqref{eq:normalizationf}.

We now apply Jensen's formula twice. A first application gives immediately
\begin{equation}
\label{eq:jensenbound}
\frac{1}{2\pi}\int_0^{2\pi} \log |f(e^{it})|\,dt \geq -A \,.
\end{equation}
We next claim that it also gives
\begin{equation}
\label{inside}
{\rm card}\,\bigl(Z_f\cap \mathbb D(1-d^{-1}M^{-1/(1+\beta)})\bigr)\le N/2.
\end{equation}

Indeed, let $z_1, \dots , z_L$ be all zeros such that  $|z_i| <  1-d^{-1}M^{-1/(1+\beta)}$. Then by Jensen's formula and \eqref{eq:normalizationf} we have
$$
\frac{L}{dM^{1/(1+\beta)}} \le \sum_{i=1}^L \log\frac{1}{|z_i|} \le A \le  \Big(\frac{N}{C'd}\Big)^{\frac{1+\beta}{2+\beta}}\,.
$$
Hence, by \eqref{och1}, \eqref{och2} the following holds:
\begin{multline*}
L \le  \Big(\frac{N}{C'd}\Big)^{\frac{1+\beta}{2+\beta}}M^{1/(1+\beta)}d \\ \le
\Big(\frac{N}{C'd}\Big)^{\frac{1+\beta}{2+\beta}}\Big(\frac{N}{d}\Big)^{\frac{1}{2+\beta}}d = (C')^{-\frac{1+\beta}{2+\beta}}  \frac{N}{d} \cdot d \le \frac{N}{2}\,,
\end{multline*}
for $C'\geq 4$. This proves \eqref{inside}.

Next, we choose $\theta\in[0,2\pi]$ such that there are (at least) $M$ zeros of $f$ in 
$$
\Omega_{\theta,M}=\Bigl\{re^{i\phi}:1-d^{-1}M^{-1/(1+\beta)}\le r\le 1,\,\theta\le \phi\le \theta+d^{-1}M^{-1/(1+\beta)}\Bigr\}.
$$
Denote the first $M$ zeros by $v_j$, $1\le j\le M$. Rotating the disc, we can assume that 
$\theta=0$.

Since
$$
f(z)=\frac1\pi \int_{\mathbb D(2)\setminus \mathbb D} \frac{h(w)}{z-w}\,dm_2(w),\qquad z\in \mathbb D(2),
$$
with
$$
|h(w)|\le \rho_\beta(d(|w|-1)),\qquad w\in \mathbb D(2)\setminus \mathbb D,
$$
for every zero $v$ of $f$ in $\overline{\mathbb D}$ we have
$$
f(z)=\frac{v-z}\pi\int_{\mathbb D(2)\setminus \mathbb D} \frac{h(w)}{(z-w)(v-w)}\,dm_2(w),
$$
and then for $1\le K\le M$ we have 
$$
f(z)=\frac{\prod_{j=1}^{K}(v_j-z)}\pi\int_{\mathbb D(2)\setminus \mathbb D} \frac{h(w)}{\prod_{j=1}^{K}(v_j-w)}\frac{1}{z-w}\,dm_2(w).
$$

Furthermore, a rough estimate gives us that 
$$
\frac{|h(w)|}{\prod_{j=1}^{K}|w-v_j|}\le \sup_{0<x<1}\rho_\beta(xd)x^{-K},\qquad w\in \mathbb D(2)\setminus \mathbb D.
$$

Hence, for every  $1\le K\le M$ and  for all $t$ such that $d^{-1}M^{-1/(1+\beta)}\le t\le 1$ the following holds:
\begin{multline*}
|f(e^{it})|\le 4(2t)^K\sup_{0<x<1}\rho_\beta(dx)x^{-K}\le 4(2dt)^K\sup_{x>0}\rho_\beta(x)x^{-K}\\
=4(2dt)^K\exp\Bigl[\frac{K}{1+\beta}\log\frac{K}{e(1+\beta)}\Bigr].
\end{multline*}
Minimising with respect to $K$ we obtain that   
\begin{equation}
|f(e^{it})| \le \exp\bigl(-3(2td)^{-(1+\beta)}\bigr),\qquad d^{-1}M^{-1/(1+\beta)}\le t\le 1.
\label{z1}
\end{equation}
(Note that $(2dt)^K\exp\Bigl[\frac{K}{1+\beta}\log\frac{K}{e(1+\beta)}\Bigr]$ viewed as a function of the real variable $K$ has a unique minimum at $K=(1+\beta)(2dt)^{-(1+\beta)}$. By the assumption $t\geq d^{-1} M^{-1/(1+\beta)}$, this is $\leq 2^{-(1+\beta)} (1+\beta) M< M$.)


\subsection{Reduction. Small values on an interval}

By a fractional linear map, we transform the function $f/10$ into an analytic function $g$ in the lower half-plane 
that extends $C^1$-smoothly to the whole plane 
and satisfies the estimates 
\begin{gather}
|\overline{\partial}g(z)|\le \rho_\beta(\Im z),\qquad |z|\le C_1d,\, \Im z>0,\notag\\
|g(z)|\le \frac12,\qquad |z|\le C_1d,\notag\\
\int_{0}^{C_1d}\log|g(x)|dx\ge -2Ad,\label{tt}
\end{gather}
for some positive absolute constant $C_1$. The first estimate here follows immediately from the corresponding bound on $\overline{\partial}f$, the second one from \eqref{eq:fbounded} and the third one from \eqref{eq:jensenbound}.

Furthermore, \eqref{z1} now reads as 
\begin{equation}
\label{z1g}
|g(x)| \le  e^{- \frac{C_2}{x^{1+\beta}}},\qquad x\in [M^{-1/(1+\beta)}, C_1d],
\end{equation}
for some positive $C_2$ depending only on $\beta_0$.

Set 
\begin{equation}
y_0=C_3M^{-1/(1+\beta)}
\label{ttt}
\end{equation}
with a universal constant $C_3$ to be determined.

By \eqref{z1g}, and \eqref{ech3}, we have 
$$
y_0^\beta\int_0^{y_0}\log|g(x)|dx 
\le -\begin{cases}
C_2\frac{C_3^\beta-1}{\beta},\qquad & \beta>0,\\
C_2\log C_3,\qquad & \beta=0.
\end{cases}
$$

Therefore, given a positive absolute constant $C_4$ to be fixed later on, we can choose $C_3>1$ in such a way that 
\begin{equation}
y_0^\beta\int_0^{y_0}\log|g(x)|dx\le -C_4.
\label{z2}
\end{equation}

\subsection{Propagation of smallness} 
Now we apply an iterative procedure similar to that used in \cite{AA93}.  
Set 
\begin{equation}
I_0=-\int_0^{y_0}\log|g(x)|dx
\label{z32}
\end{equation}
and define two sequences $(\gamma_k)_{k\ge 1}$, $\gamma_k\in (0,1]$, $k\ge 1$, and $(I_k)_{k\ge 1}$ in the following inductive way. For $k\ge 1$ set 
\begin{gather}
\label{14}
-\log\rho_\beta(2^{k-1}y_0\gamma_k)=C_{5}\frac{\gamma_kI_{k-1}}{2^{k-1}y_0},\\
\label{11}
I_k=I_{k-1}-C_{6} 2^{k-1}y_0\log \rho_\beta(2^{k-1}y_0\gamma_k)=(1+C_5C_6\gamma_k)I_{k-1},
\end{gather}
for some small positive absolute constant $C_5$ and for some positive absolute constant $C_6$ to be fixed later on.

Equation \eqref{14} can be rewritten as 
$$
(2^{k-1}y_0\gamma_k)^{-1-\beta}=C_5\frac{\gamma_kI_{k-1}}{2^{k-1}y_0},
$$
or, equivalently, as 
\begin{equation}
\label{12}
\gamma_k^{2+\beta}=\frac1{C_{5}(2^{k-1}y_0)^{\beta}I_{k-1}}.
\end{equation}
Since the numbers $I_k$ increase, to prove that such $\gamma_k\in (0,1]$ exist for every $k\ge 1$, 
we need only to check that 
\begin{equation}
\label{z41}
\frac1{C_{5}y_0^{\beta}I_{0}}\le 1
\end{equation}
which follows from \eqref{z2} with $C_4=1/C_5$.

Let us check by induction that
\begin{equation}
\label{stst}
\int_0^{2^ny_0}\log|g(x)|dx\le -I_n, \qquad 0\le n\le \log_2\frac{C_1d}{2y_0}.
\end{equation}

The base case $n=0$ follows from \eqref{z32}. If \eqref{stst} holds for $n=k-1$, then a simple estimate of the Poisson integral 
together with relation \eqref{14} shows that for some positive absolute constant $C_5\in(0,1]$ we have 
\begin{multline*}
\log|g(z)|\le C_5\frac{2^{k-1}y_0\gamma_k}{(2^{k-1}y_0)^2}\int_0^{2^{k-1}y_0}\log|g(x)|dx\\ \le 
\log\rho_\beta(2^{k-1}y_0\gamma_k),\qquad z\in T_k=[2^{k-1}y_0,2^ky_0]-i2^ky_0\gamma_k.
\end{multline*}
Next we consider the rectangle 
$$
U_k=\bigl\{z\in\mathbb C: -2^ky_0\gamma_k\le \Im z\le 2^{k-1}y_0\gamma_k,\, 0\le \Re z\le 2^{k+1}y_0\bigr\}
$$
and the auxiliary function
$$
g_k(z)=g(z)-\frac1\pi \int_{U_k}\frac{\overline{\partial}g(\zeta)}{z-\zeta}dm_2(\zeta).
$$
It is clear that $g_k$ is analytic on $U_k$ and bounded by $1$. 
Furthermore, for some positive absolute constant $C_7$, 
$$
\log|g_k(z)|\le C_7\log\rho_\beta(2^{k-1}y_0\gamma_k),\qquad z\in T_k\subset \partial U_k.
$$
Since $\gamma_k\le 1$, a simple geometric argument shows that 
$$
\omega(x,T_k,U_k)\ge C_8>0,\qquad x\in J_k=[2^{k-1}y_0,2^ky_0],
$$
and by the theorem on harmonic estimation (see, for example, \cite[Section VII\,B]{Koo})we have 
$$
\log|g_k(x)|\le C_7C_8\log\rho_\beta(2^{k-1}y_0\gamma_k),\qquad x\in J_k,
$$
for some positive absolute constant $C_8$.

Hence,
$$
\log|g(x)|\le C_6\log\rho_\beta(2^{k-1}y_0\gamma_k),\qquad x\in J_k,
$$
for some positive absolute constant $C_6$.
Furthermore,
$$
\int_0^{2^ky_0}\log|g(x)|dx\le -I_{k-1}+C_6\cdot 2^{k-1}y_0\log\rho_\beta(2^{k-1}y_0\gamma_k)=-I_k.
$$
Thus, \eqref{stst} is proved.

\subsection{Estimating the number of zeros. Case $\beta=0$.}
Relations \eqref{11} and \eqref{12} give us that 
$$
I_k=I_{k-1}+\sqrt{C_5}C_6\sqrt{I_{k-1}},\qquad k\ge 0.
$$
Therefore,
$$
I_k\ge C k^2,\qquad  k\ge 1.
$$
and hence, by \eqref{tt}, \eqref{ttt} and \eqref{stst} with $n=\lfloor \log_2 (C_1 d)/(2y_0) \rfloor$ we find 
$$
\left( \lfloor \log_2 (C_1 dM)/(2C_3) \rfloor \right)^2 \leq \frac{2}{C} Ad \,.
$$
Thus, if $(C_1 dM)/(2C_3)\geq 4$, then
$$\lfloor \log_2 (C_1 dM)/(2C_3) \rfloor \geq (1/2) \log_2 (C_1 dM)/(2C_3)
$$
and therefore
$$
( \log (C_1 dM)/(2C_3) )^2 \leq \frac{8 (\log 2)^2}{C} Ad \,.
$$
According to \eqref{eq:comparisonm} this implies
$$
N \leq \frac{16 C_3^2}{C_1^2} \, \frac{1}{d} \, e^{2 \sqrt{8(\log 2)^2/C}\, \sqrt{Ad} } \,.
$$
This is the claimed bound.

On the other hand, if $(C_1 dM)/(2C_3)< 4$, then, again by \eqref{eq:comparisonm},
$$
N < \frac{2^8\, C_3^2}{C_1^2} \, \frac{1}{d} \,.
$$
This bound is, up to universal constants, better than the claimed one.

We now recall that so far, we worked under assumptions \eqref{och1} and \eqref{ech3}. If these fail, then we have the bounds \eqref{eq:extra1} and \eqref{eq:extra2}. We claim that both of these bounds are, up to universal constants, better than the claimed ones. This is clear for \eqref{eq:extra1}. For \eqref{eq:extra2} it follows from the fact that $dA^2 \leq (4/e)^4 \, d^{-1} \, e^{\sqrt{Ad}}$.

We summarize our findings as follows

\begin{theorem}\label{delta01}
Let $f\in \mathfrak C_0$ with $d'_f=1$ and $|f(0)|\ge \exp(-A)$ for some $A\ge 1$. Then for some positive absolute constant $c$ we have 
$$
N_f\le \frac{c}{d_f}\exp(c\sqrt{Ad_f}) \,.
$$
\end{theorem}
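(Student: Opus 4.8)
The plan is to assemble Theorem \ref{delta01} directly from the chain of estimates established in Section \ref{localization}, reading off the bound under the two working hypotheses \eqref{och1} and \eqref{ech3} and then checking that when either hypothesis fails the resulting alternative bounds \eqref{eq:extra1} and \eqref{eq:extra2} are no worse. First I would fix $f\in\mathfrak C_0$ with $d'_f=1$, write $d=d_f$, and note that by \eqref{eq:fbounded} we may divide $f$ by $2\sqrt 3$ to arrange $|f|\le 1$ on $\mathbb D(2)$ at the cost of replacing $A$ by $A+\ln(2\sqrt 3)\le(1+\ln(2\sqrt3))A$, which affects only constants; so assume \eqref{eq:normalizationf}. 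Then invoke the propagation-of-smallness machinery with $\beta=0$: one sets $M=\lfloor(N/d)^{1/2}\rfloor$ as in \eqref{och2}, uses Jensen (estimates \eqref{eq:jensenbound} and \eqref{inside}) to locate at least $M$ zeros $v_1,\dots,v_M$ in an angular box $\Omega_{\theta,M}$ of both radial and angular size $d^{-1}M^{-1}$, and peels them off to obtain the pointwise smallness \eqref{z1}, i.e.\ $|f(e^{it})|\le\exp(-3(2td)^{-1})$ on $[d^{-1}M^{-1},1]$.

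Next I would transfer the problem to the half-plane via the fractional linear map, obtaining the function $g$ satisfying the estimates \eqref{tt} together with the interval smallness \eqref{z1g} (with $\beta=0$: $|g(x)|\le e^{-C_2/x}$ on $[M^{-1},C_1d]$). Choosing $y_0=C_3M^{-1}$ and $C_3$ large enough yields \eqref{z2}, i.e.\ $\int_0^{y_0}\log|g|\le -C_4$ with $C_4=1/C_5$. Then run the inductive propagation argument culminating in \eqref{stst}: $\int_0^{2^n y_0}\log|g(x)|\,dx\le -I_n$ for $0\le n\le\log_2(C_1d/(2y_0))$. For $\beta=0$, relations \eqref{11} and \eqref{12} reduce the recursion to $I_k=I_{k-1}+\sqrt{C_5}\,C_6\sqrt{I_{k-1}}$, whence $I_k\ge Ck^2$. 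Combining with the lower bound $\int_0^{C_1d}\log|g(x)|\,dx\ge -2Ad$ from \eqref{tt}, applied with $n=\lfloor\log_2(C_1d/(2y_0))\rfloor=\lfloor\log_2(C_1dM/(2C_3))\rfloor$, gives $(\lfloor\log_2(C_1dM/(2C_3))\rfloor)^2\le (2/C)Ad$.

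From here it is arithmetic: if $C_1dM/(2C_3)\ge 4$ then the floor is at least half the logarithm, so $(\log(C_1dM/(2C_3)))^2\le 8(\log 2)^2 C^{-1}Ad$, and solving for $M$ and using $N\le (M+1)^2\le 4M^2$ via \eqref{eq:comparisonm} yields $N\le 16C_3^2C_1^{-2}d^{-1}\exp(2\sqrt{8(\log2)^2/C}\,\sqrt{Ad})$, which is the claimed form $N\le c\,d^{-1}\exp(c\sqrt{Ad})$. If instead $C_1dM/(2C_3)<4$, then $N<2^8C_3^2C_1^{-2}d^{-1}$, which is absorbed into the claimed bound. Finally, when the working hypotheses fail: if \eqref{ech3} fails (but \eqref{och1} holds) then \eqref{eq:extra1} gives $N\lesssim d^{-1}$, again absorbed; and if \eqref{och1} fails then \eqref{eq:extra2} gives $N< C'dA^2$, and here one uses the elementary inequality $dA^2\le(4/e)^4\,d^{-1}e^{\sqrt{Ad}}$ (valid since $t\mapsto t^2 e^{-\sqrt t}$ is bounded) to see this too is dominated by the claimed bound.

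I expect the genuine work to be already contained in the propagation-of-smallness step (the construction of $g$, the harmonic-measure / harmonic-estimation estimate behind \eqref{stst}, and the verification that $\gamma_k\in(0,1]$ via \eqref{z41}), all of which is carried out in the preceding subsections; the remaining obstacle for this final theorem is purely bookkeeping — ensuring that every case (the two sub-cases of the main argument plus the two failure modes of \eqref{och1}, \eqref{ech3}) produces a bound of the uniform shape $c\,d_f^{-1}\exp(c\sqrt{Ad_f})$ with a single absolute constant $c$, and that the normalization $|f|\le 1$ cost in $A$ is harmless. The one place to be slightly careful is the elementary estimate comparing the polynomial bound $dA^2$ from the failure of \eqref{och1} with the exponential target, since that is the only step where a non-obvious inequality enters.
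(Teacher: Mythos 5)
Your proposal reproduces the paper's own proof essentially step for step: the normalization to $|f|\leq 1$, the use of \eqref{och1}/\eqref{ech3} as working hypotheses, the reduction to the half-plane function $g$, the inductive propagation of smallness yielding $I_k\geq Ck^2$, the case split on whether $C_1dM/(2C_3)\geq 4$, and the disposal of the two failure modes via \eqref{eq:extra1}, \eqref{eq:extra2} together with the inequality $(Ad)^2 e^{-\sqrt{Ad}}\leq (4/e)^4$. The one imprecision is the line ``$N\le(M+1)^2\le 4M^2$'': by \eqref{eq:comparisonm} with $\beta=0$ one has $M\geq\tfrac12\sqrt{N/d}$, so the correct reading is $N\leq 4dM^2$ (otherwise one would obtain $d^{-2}$ rather than $d^{-1}$ in the final bound); since you do state the correct final estimate $N\le 16C_3^2C_1^{-2}d^{-1}\exp(\cdot)$, this is a slip in transcription rather than a gap in the argument.
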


\begin{remark}
Taking into account Remark \ref{remb0}, we obtain the slightly stronger bound
$$
N_f\le \frac{c}{d_f} \min\{1, Ad_f\} \exp(c\sqrt{Ad_f}) \,.
$$
\end{remark}


\subsection{Estimating the number of zeros. Case $\beta>0$. Phase transition.}
Set 
$$
R_k=I_k(2^k y_0)^\beta,\qquad k\ge 0.
$$
Relations \eqref{11} and \eqref{12} give us that  
$$
R_k\ge R_{k-1}+C_9R_{k-1}^{(1+\beta)/(2+\beta)}
$$
with $C_9=2^\beta C_5^{(1+\beta)/(2+\beta)}C_6>0$. By \eqref{z41}, $R_0\ge C_5^{-1}$.

As in the case $\beta=0$, we obtain that
$$
R_k\ge C_{10}\, k^{2},\qquad k\ge 1,
$$
for some positive $C_{10}$ depending only on $\beta_0$. 
Hence, 
$$
I_k\ge
\frac1{2^{2\beta_0}} C_{10}k^2y_0^{-\beta},\qquad 1\le k\le \frac{2\beta_0}{\beta}.
$$

By \eqref{tt}, \eqref{ttt}, and \eqref{stst} with $n=\min\{\lfloor \frac{2\beta_0}{\beta} \rfloor, \lfloor \log_2 \frac{C_1 d}{2y_0}\rfloor \}$ we have
$$
\min\left(\lfloor \frac{2\beta_0}{\beta} \rfloor, \lfloor \log_2 \frac{C_1 d}{2y_0}\rfloor \right)^2 M^{\beta/(1+\beta)}
\leq \frac{2^{1+2\beta_0} C_3^\beta}{C_{10}}  Ad
$$
We claim that we may assume that
\begin{equation}
\label{eq:loglarge}
C_1d/(2y_0) = C_1 d M^{1/(1+\beta)}/(2C_3)\geq 4 \,.
\end{equation}
Indeed, if this does not hold, we obtain using \eqref{eq:comparisonm} a bound which is of the same form as \eqref{eq:extra1}, up to possibly a different constant.

Assumption \eqref{eq:loglarge} together with $2\beta_0/\beta\geq 2$ allows one to simplify the previous bound to
\begin{equation}
\label{ech2}
C_{11} \min\Bigl(\frac1\beta,\log (C_{12}dM^{1/(1+\beta)})\Bigr)^2M^{\beta/(1+\beta)} \leq Ad
\end{equation}
for some positive $C_{11},C_{12}$ depending only on $\beta_0$.

We now distinguish two cases, according to the size of $Ad\beta^2$. Suppose first that $Ad\beta^2<1$. Note that, by \eqref{och1} and \eqref{och2}, if $C'$ is chosen sufficiently large, then
\begin{equation}
\label{ech18}
C_{11}M^{\beta/(1+\beta)}\ge 1 \,.
\end{equation}
This together with the assumption $Ad\beta^2<1$ implies that the minimum in \eqref{ech2} is attained at $\log (C_{12}dM^{1/(1+\beta)})$ and therefore \eqref{ech2} becomes
$$
C_{11}\log^2 (C_{12}dM^{1/(1+\beta)})M^{\beta/(1+\beta)} \leq Ad \,.
$$
Using \eqref{ech18} again to bound the left side from below, we conclude by \eqref{och2} that 
\begin{equation}
\label{delta0}
N \le \frac{C}{d^{1+\beta}}\exp(C\sqrt{Ad})
\end{equation}
for some positive $C$ depending only on $\beta_0$.

We recall that this bound was derived under the additional assumptions \eqref{och1} and \eqref{ech3}. If one of the restrictions \eqref{och1} and \eqref{ech3} does not hold, then we have \eqref{eq:extra1} and \eqref{eq:extra2}. The first of these is clearly better than \eqref{delta0}. To prove this also for \eqref{eq:extra2} we use the fact that $A\geq 1$ and therefore
\begin{align*}
d A^{(2+\beta)/(1+\beta)} & = d^{-(1+\beta)} (A d)^{2+\beta} A^{-\beta(2+\beta)/(1+\beta)} \leq d^{-(1+\beta)} (A d)^{2+\beta} \\
& \leq  \left( \frac{2+\beta}{2e} \right)^{(2+\beta)/2}  d^{-(1+\beta)} \exp(\sqrt{A d}).
\end{align*}
Thus, we have shown \eqref{delta0} under the sole assumption that $Ad\beta^2<1$.

Next, we discuss the case $Ad\beta^2\ge 1$. We assume first that in addition $\log (C_{12}dM^{1/(1+\beta)})> 1/\beta$. Then, \eqref{ech2} gives us that 
$$
C_{11} M^{\beta/(1+\beta)} \leq dA\beta^2 \,.
$$
By \eqref{eq:comparisonm}, we obtain
\begin{equation}
N \le  d(Ad\beta^2)^{(2+\beta)/\beta}\exp(C/\beta)
\label{R16a}
\end{equation}
for some positive $C$ depending only on $\beta_0$. On the other hand, if $\log (C_{12}dM^{1/(1+\beta)})\le  1/\beta$, then, by \eqref{eq:comparisonm} we have
\begin{equation}
N \le  d^{-(1+\beta)}\exp(C/\beta)
\label{R16b}
\end{equation}
for some positive $C$ depending only on $\beta_0$.

To get \eqref{R16a} and \eqref{R16b} we still have used conditions \eqref{och1} and \eqref{ech3}. If one of the restrictions \eqref{och1} and \eqref{ech3} does not hold, we conclude from \eqref{eq:extra1} and \eqref{eq:extra2}, still assuming $Ad\beta^2\ge 1$, that 
$$
N \le \max\Bigl(d(Ad\beta^2)^{(2+\beta)/\beta}\exp(C/\beta),Cd^{-(1+\beta)}\exp(C/\beta)\Bigl)
$$
for some positive $C$ depending only on $\beta_0$. Indeed, this is clear for \eqref{eq:extra1}. In order to show that the right side of \eqref{eq:extra2} is smaller than the expressions on the right sides of \eqref{R16a} and \eqref{R16b}, we distinguish according to whether $Ad^{1+\beta} \beta^{2(1+\beta)}\leq 1$ or not.

We summarize our findings in the following theorem. We observe a {\it phase transition} in the estimate of the number of zeros depending on $A$, $d$ and $\beta$.

\begin{theorem}\label{t2} Let $0<\beta\le \beta_0$, $f\in \mathfrak C_\beta$, $d'_f=1$, $|f(0)|\ge \exp(-A)$ for some $A\ge 1$. 
Then for some positive $c$ depending only on $\beta_0$, we have
$$
N_f\le 
\begin{cases}
 \frac{c}{d_f^{1+\beta}}\exp(c\sqrt{Ad_f}),\quad & A\le d_f^{-1}\beta^{-2},\\
\max\Bigl(d_f(Ad_f\beta^2)^{\frac{2+\beta}\beta}e^{\frac{c}\beta},cd_f^{-(1+\beta)}e^{\frac{c}\beta}\Bigl),\quad & A\ge d_f^{-1}\beta^{-2}.
\end{cases} 
$$
\end{theorem}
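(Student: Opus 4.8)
The plan is to assemble the estimates that the preceding subsections of Section~\ref{localization} have already produced; no genuinely new step is needed, the proof of Theorem~\ref{t2} being the synthesis of the propagation-of-smallness iteration carried out above. Recall the overall strategy: after normalizing $|f|\le 1$ on $\mathbb D(2)$ and transferring $f$ to a half-plane model $g$, one uses Jensen's formula to get a lower bound on $\int\log|g|$ on an interval, then the harmonic-estimation iteration \eqref{14}--\eqref{stst} turns this into the growth $R_k\ge C_{10}k^2$, which via \eqref{ech2} bounds $M$ (hence $N$, through \eqref{eq:comparisonm}) in terms of $Ad$ and $\beta$.

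First I would collect what has been proved \emph{under} the auxiliary assumptions \eqref{och1} and \eqref{ech3}. Splitting on the size of $Ad\beta^2$: if $Ad\beta^2<1$ one obtains \eqref{delta0}, i.e.\ $N\le C d^{-(1+\beta)}\exp(C\sqrt{Ad})$; if $Ad\beta^2\ge 1$ one obtains \eqref{R16a} and \eqref{R16b}, i.e.\ $N\le\max\bigl(d(Ad\beta^2)^{(2+\beta)/\beta}e^{C/\beta},\,C d^{-(1+\beta)}e^{C/\beta}\bigr)$. Since $Ad\beta^2<1$ is the same as $A<d^{-1}\beta^{-2}$, these two alternatives already have the shape of the two branches in the statement, the $\beta_0$-dependent constants being absorbed into a single $c$.

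Next I would dispose of the case in which \eqref{och1} or \eqref{ech3} fails. Then the crude bounds \eqref{eq:extra1} (of order $d^{-(1+\beta)}$) and \eqref{eq:extra2} ($N< C' d A^{(2+\beta)/(1+\beta)}$) are available, and I would check, regime by regime, that each is dominated up to $\beta_0$-dependent constants by the claimed right-hand side. For \eqref{eq:extra1} this is immediate, since $d^{-(1+\beta)}$ (times $e^{c/\beta}\ge 1$ in the second branch) appears in both branches. For \eqref{eq:extra2} I would use $A\ge 1$ and elementary inequalities of the type already used in the text: $d A^{(2+\beta)/(1+\beta)}\le d^{-(1+\beta)}(Ad)^{2+\beta}\le \mathrm{const}\cdot d^{-(1+\beta)}\exp(\sqrt{Ad})$ in the regime $A\le d^{-1}\beta^{-2}$, and a further split according to whether $Ad^{1+\beta}\beta^{2(1+\beta)}\le 1$ or not to handle the regime $A\ge d^{-1}\beta^{-2}$, matching \eqref{eq:extra2} against $C d^{-(1+\beta)}e^{C/\beta}$ or against $d(Ad\beta^2)^{(2+\beta)/\beta}e^{C/\beta}$.

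Putting everything together, in each of the two regimes $A\le d^{-1}\beta^{-2}$ and $A\ge d^{-1}\beta^{-2}$ the genuine bound and all fallback bounds are at most the stated expression times a constant depending only on $\beta_0$; relabelling that constant $c$ completes the proof. I expect the only real obstacle to be the slightly delicate but routine verification of the last paragraph --- confirming that each fallback estimate, in each sub-regime, is genuinely dominated by the claimed expression --- plus double-checking the consistency of the $\exp(c\sqrt{Ad})$ form with the polynomial form at the threshold $Ad\beta^2=1$. Nothing beyond the material of Sections~\ref{cruder} and~\ref{localization} is required.
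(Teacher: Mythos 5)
Your proposal is correct and follows essentially the same route as the paper: the paper also treats Theorem~\ref{t2} as the summary of the propagation-of-smallness iteration, obtaining \eqref{delta0}, \eqref{R16a}, \eqref{R16b} under the auxiliary assumptions \eqref{och1}, \eqref{ech3}, then disposing of the cases where those assumptions fail by comparing the crude fallback bounds \eqref{eq:extra1} and \eqref{eq:extra2} against the claimed expression via the inequality $dA^{(2+\beta)/(1+\beta)}\le d^{-(1+\beta)}(Ad)^{2+\beta}\le \mathrm{const}\cdot d^{-(1+\beta)}\exp(\sqrt{Ad})$ (for $Ad\beta^2<1$) and the split on whether $Ad^{1+\beta}\beta^{2(1+\beta)}\le 1$ (for $Ad\beta^2\ge 1$). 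The consistency check at the threshold $Ad\beta^2=1$ that you flag is not needed, since the two branches are allowed to overlap, but otherwise the synthesis you describe is exactly the paper's argument.
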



\subsection{Proof of Theorem \ref{main}}\label{sec:proofmain}

Let $f$ be in $\mathfrak A_\beta$ and assume that $|f(0)|\ge \exp(-A)$. According to Theorem \ref{thext_small_af} we have $f\in \mathfrak C_\beta$ with $d_f = C a_f^{-(2+\beta)/(1+\beta)} = C d$ and $d_f'=C_1 a_f' a_f^{-2(2+\beta)/(1+\beta)} = C_1 a_f' d^2$. Then the function $\tilde f = f/d_f'$ satisfies $d_{\tilde f} = d_f=Cd$, $\tilde d_{\tilde f}' = 1$ and $|\tilde f(0)| \geq e^{-\tilde A}$ with
$$
\tilde A = A + \log (C_1 a_f' d^2) = A' + \log C_1 \,.
$$
We may assume that $C_1\geq 1$ and therefore $\tilde A \geq A'\geq 1$. Therefore, Theorems \ref{t1}, \ref{delta01} and \ref{t2} applied to $\tilde f$ imply the conclusion of Theorem~\ref{main} but with $\tilde A$ in place of $A'$. If we assume, in addition, that $A' \geq \log C_1$, then we have $\tilde A \leq 2A'$ and therefore in the upper bound we can replace $\tilde A$ by $2A'$. On the other hand, if $1\leq A'<\log C_1$, then $\tilde A<2\ln C_1 \leq 2(\log C_1) A'$ and therefore in the upper bound we can replace $\tilde A$ by $2(\log C_1)A'$. This proves the theorem.


\section{Sharpness of the estimate of the number of zeros in the case $\beta=0$}
\label{sharp}

In this section we show that Theorem~\ref{main1} is almost sharp in the case $\beta=0$. 
It looks plausible that the same construction will show the sharpness of Theorem~\ref{main1} for small positive $\beta$. 
We leave this as a separate study to limit the size of the current article. It would also be interesting to understand how sharp is Theorem~\ref{main}. 
That seems to be more delicate task since we need to deal here with three parameters ($A,d,\beta$) and their different influence on the final estimate.

\begin{theorem}\label{tsh} Let $\delta>0$. Given $A\ge A(\delta)$, there exists $f\in \mathfrak C_0$ satisfying \eqref{xde4}  
with some absolute constants $d_f$, $d'_f$  
and such that $|f(0)|\ge \exp(-A)$, $N_f\ge \exp(A^{1/2-\delta})$.
\end{theorem}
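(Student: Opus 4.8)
The plan is to build $f$ as an explicit infinite product (or a Cauchy transform that behaves like one) whose zeros are placed in the unit disc so as to maximize their number while keeping $|f(0)|$ from being too small and keeping $f$ in $\mathfrak C_0$. The natural candidate is a product of the form
$$
f(z) = \prod_{k} \left( \frac{b_k - z}{1 - \bar b_k z} \right)^{m_k} E_k(z),
$$
where the $b_k$ accumulate at a single boundary point (say $z=1$) along a sequence $|b_k| = 1 - r_k$ with $r_k \to 0$, $m_k$ is the multiplicity assigned to the $k$-th location, and $E_k$ is an outer-type correction factor (or simply a carefully chosen analytic factor) inserted to guarantee that the Taylor coefficients $\widehat f(n)$ decay like $\exp(-a_f \sqrt n)$, i.e. that $f \in \mathfrak A_0 = \mathfrak C_0$. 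The total zero count is $N_f = \sum_k m_k$, while $\log(1/|f(0)|) \asymp \sum_k m_k \log(1/|b_k|) \asymp \sum_k m_k r_k$ (up to contributions from the $E_k$). So the optimization problem is: given the ``budget'' $\sum_k m_k r_k \le A$, maximize $\sum_k m_k$ subject to the membership constraint $f\in\mathfrak C_0$, which, via Dyn'kin's description, forces a quantitative relation between how fast the $r_k$ may shrink, how large the $m_k$ at scale $r_k$ may be, and the Gevrey parameter. Choosing $r_k \asymp 2^{-k}$ and $m_k$ as large as the Carleman--Gevrey constraint permits at that scale — one expects roughly $m_k \asymp 2^{k}\cdot(\text{something like } k^{-1-\delta'})$ so that the coefficient decay $e^{-a_f\sqrt n}$ survives — gives $\sum m_k r_k \asymp \sum_k m_k r_k$ convergent (bounded by $A$ after truncating at $k\asymp k_0$ with $2^{k_0}\asymp A^{?}$) while $\sum m_k \asymp 2^{k_0}/k_0^{?} \asymp \exp(c\, k_0)$, which, for the right calibration of $k_0$ in terms of $A$, is $\ge \exp(A^{1/2-\delta})$.

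Concretely, I would proceed as follows. First, fix the geometry: pick a lacunary sequence of radii $r_k = 2^{-k}$, place $m_k$ zeros of $f$ near the boundary point $1$ at modulus $1-r_k$ (spreading them slightly in argument over an arc of length comparable to $r_k$ so the local product is well controlled), and choose $m_k$ by the rule that makes the truncated product $P_N(z)=\prod_{k\le N}(\cdots)^{m_k}$ satisfy $|P_N(0)| \ge e^{-A/2}$, i.e. $\sum_{k\le N} m_k r_k \le A/2$. Second, and this is where the analytic work lies, verify that the Taylor coefficients of $f$ decay at the Gevrey rate $\exp(-a_f n^{1/2})$: this is cleanest to do through the Dyn'kin side, realizing $f$ as a Cauchy transform of a function $\varphi$ supported in $\mathbb D(2)\setminus\mathbb D$ with $|\varphi(z)| \le d_f' \rho_0(d_f(|z|-1)) = d_f' \exp(-d_f/(|z|-1))$; one constructs $\varphi$ by a pseudo-analytic extension of the boundary data of $f$, checks the pointwise bound on $\bar\partial f$ scale by scale using that near $|z|=1+s$ the dominant contribution comes from the zeros at scale $r_k\asymp s$, and confirms the exponential-in-$1/s$ bound. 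Third, estimate $N_f$ from below: $N_f = \sum_{k\le N} m_k$, and one computes this sum explicitly under the calibration chosen in step one, optimizing the free parameters (the exact growth of $m_k$, the truncation index $N$) to push the exponent up to $A^{1/2}$ minus an arbitrarily small $\delta$.

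The main obstacle will be step two: showing $f \in \mathfrak C_0$ with \emph{absolute} constants $d_f, d_f'$ independent of $A$. Packing $m_k \to \infty$ zeros at scale $r_k$ tends to make $f$ and its $\bar\partial$-extension large near the boundary, and one must check that the required bound $|\varphi(z)| \le d_f'\exp(-d_f/(|z|-1))$ is not destroyed — i.e. that the number of zeros at each dyadic scale is exactly at the threshold dictated by the $n^{1/2}$ Gevrey decay and no more. Equivalently, on the Taylor side, one must control $\sum_k m_k \log^+ \frac{1}{|1-\bar b_k r e^{i\theta}|}$-type sums and the resulting coefficient asymptotics; the delicate point is that the logarithmic loss in going from $N_f \asymp \exp(ck_0)$ and $A \asymp 2^{k_0 (1/2+\varepsilon')}$-type relations down to the clean statement $N_f \ge \exp(A^{1/2-\delta})$ must be absorbed into the arbitrary $\delta$, so the bookkeeping of how the Gevrey constraint trades against the counting is the heart of the matter. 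Once the scale-by-scale estimate on $\bar\partial f$ is in hand, fixing $\delta$, then choosing the lacunarity and the truncation index accordingly, and finally reading off $|f(0)| \ge e^{-A}$ and $N_f \ge \exp(A^{1/2-\delta})$ is routine.
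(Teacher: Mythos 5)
Your proposal and the paper's proof are genuinely different, and your own proposal identifies the exact spot where the hard work lies without resolving it. Here is the comparison.

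The paper does \emph{not} place zeros directly. It first builds an \emph{outer} function $g$ (no zeros at all) whose boundary modulus is prescribed so that (i) $\log|g(1)| = -A$, (ii) $g$ admits a $C^1$ extension with $|\bar\partial g|\lesssim e^{-c/|\Re z|}$ in a half-plane picture, and (iii) $|g|\le e^{-C_* e^{A^{1/2-\tau}}}$ on a tiny arc $I_A$ near the boundary point. The pseudoanalytic extension with absolute constants is established via a conformal map of a logarithmically-pinched domain $\Omega$ and Warschawski's distortion estimates, and the calibration of the cutoff $x_A$ simultaneously yields (i) and (iii). Zeros are then produced by subtracting the Lagrange interpolation polynomial $L$ of $g$ at $N=\lfloor e^{A^{1/2-2\tau}}\rfloor$ equally spaced points on $I_A$: since $g$ is doubly exponentially small there, $\|L\|_{\mathbb D(2)}\le e^{-A}/2$, so $f=g-L$ still satisfies $|f(0)|\ge e^{-A}/2$, still has the same $\bar\partial$ bound up to constants, and now has exactly $N$ zeros. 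This decouples the two problems: smoothness is a property of the outer function alone, and zeros are obtained essentially for free from an exponentially small additive perturbation, whose effect on both $f(0)$ and $\bar\partial f$ is trivial to control.

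Your approach reverses the order: place $m_k$ zeros at scale $r_k=2^{-k}$ via a product of Blaschke-type factors, then argue that corrections $E_k$ can be inserted to recover the decay $|\widehat f(n)|\lesssim e^{-a_f\sqrt n}$ with \emph{absolute} constants. This is the genuine gap, and you name it yourself. Two concrete reasons it is not just bookkeeping. First, for a function in $\mathfrak C_0$ with absolute $d_f,d_f'$ the propagation-of-smallness estimate (Section~\ref{localization} of the paper) forces $|f|$ to be of order $\exp(-c/\mathrm{dist})$ on the boundary near a heavy cluster of zeros; a finite Blaschke product with $\sim 2^k k$ zeros at scale $2^{-k}$ does not have this decay, so the $E_k$ must supply essentially all of it, and then the identity $\log(1/|f(0)|)\asymp \sum m_k r_k$ is destroyed --- the outer corrections dominate $\log|f(0)|$, and your budget accounting no longer closes. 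Second, as written your calibration is internally inconsistent: with $m_k\asymp 2^k k^{-1-\delta'}$ you have $\sum m_k r_k<\infty$ bounded by a constant, so $A$ cannot be made large; one would need $m_k r_k$ roughly constant or growing, at which point the Gevrey membership becomes even harder to check. In short, the multiplicative construction forces you to control the interaction between the zero-placing factors and the smoothing factors at every dyadic scale, which is exactly the difficulty that the paper's ``outer function plus tiny subtractive perturbation'' strategy is designed to avoid.

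If you want to salvage the spirit of your idea, the lesson from the paper is to \emph{not} try to write $f$ as a product with zeros built in, but to first solve the extremal problem for an outer function (prescribing $|f|$ on the boundary so that the normalization $|f(0)|\ge e^{-A}$ and the Gevrey $\bar\partial$-bound both hold with absolute constants, while $|f|$ is doubly exponentially small on a short arc), and then to observe that any function that small on an arc of length $\ell$ can be matched by a polynomial of degree $N$ with $N\log(1/\ell)$ controlled by the smallness, with all the zeros of the difference delivered for free.
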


\begin{proof}
It will be convenient for us to construct first a function $g$ analytic in the right half-plane $\mathbb C_+$ 
with good estimates on the 
$\bar\partial$-derivative in the left half-plane $\bC_-$ and such that $|g(1)|= \exp(-A)$.

Let $\eps\in(0,1/10)$ be a small number to be chosen later on. Denote by 
$\Pi$ the standard strip $\bigl\{x+iy\in\mathbb C: |y|<\frac{\pi}2\bigr\}$. We set 
$$
h(y)=\min \Bigl(1,\frac{\eps |y|}{\log(1/|y|)}\Bigr),\qquad y\in\mathbb R, 
$$
and consider a domain $\Omega$ given by
$$
\Omega=\bigl\{ x+iy\in\mathbb C: x > -h(y)\bigr\}.
$$
which is slightly bigger than $\bC_+$. 

Let $\chi: \Om \to \bC_+$ be the conformal map fixing the points $0$, $1$, and infinity. 
Furthermore, let $\cB=\{\log z: z \in \Omega\}$. Then we can write
$$
\chi= \exp\circ \,\varphi\circ \log,
$$
where $\varphi$ is a conformal map from the curvilinear strip $\cB$ onto the standard strip $\Pi$ fixing the points $0$, $\pm\infty$. 
The strip $\cB$ at $-\infty$ looks like  
$\bigl\{x+iy\in\mathbb C:|y| <  s(x)\bigr\}$, $s(x)=\frac{\pi}2 + \frac{\eps}{|x|} +O(1/x^2)$, $|s'(x)|=O(1/x^2)$, $x\to-\infty$. 
By Warschawski's distortion theorems \cite{W41} we obtain that 
$$
|\chi(z)|\asymp e^{-\pi \int_0^{\log\frac{1}{|z|}} \frac{dx}{ 2s(x)}}\asymp |z|\Bigl( \log\frac1{|z|}\Bigr)^{\kappa},\qquad z\to 0,
$$
and that
\begin{equation}
\label{Wa1}
|\chi'(z) |\asymp \Bigl( \log\frac1{|z|}\Bigr)^{\kappa},\qquad z\to 0.
\end{equation}
where $\kappa=2\eps/\pi$.

\subsection*{A modified domain}
\label{xA}
Given a small number $x_A>0$, set 
$$
h_*(y)=\max(x_A,h(y))
$$
and consider a domain $\Omega_*$ containing $\Omega$, 
$$
\Omega_*=\bigl\{ x+iy\in\mathbb C: x > -h_*(y)\bigr\}.
$$

Next we consider the outer function $g$ in $\Om_*$ determined by its absolute values on the boundary:  
$$
\log|g(z)| =-\frac {b}{|\Re z|}, \qquad z\in \partial \Om_*,
$$
for some $b=b(\eps)\asymp 1$ to be chosen later on.

Let $h(y_A)=x_A$, $y_A>0$. Set $r_A=(x_A^2+y_A^2)^{1/2}$. 
The boundaries of $\Om$ and $\Om_*$ coincide outside of the disc $\mathbb D(r_A)$; inside the disc $\mathbb D(r_A)$ 
they are different: $\pd\Om\cap \mathbb D(r_A)$ consists of two smooth curves belonging to the set $\bigl\{x+iy\in\mathbb C:x=-h(y)\bigr\}$  
while $\pd\Om_*\cap \mathbb D(r_A)$ is just a vertical interval in $\bC_-$.
Set $\Gamma=\pd\Om\cap\pd\Om_*$. 
Let $\om$ be harmonic measure on $\Om$, evaluated at point $1$.

We want to choose $x_A$ in such a way that 
\begin{equation}
\label{xA1}
-\int_{\pd\Om} \log |g(z)| \,d\om(z) =A.
\end{equation}

Notice that $\om(\pd\Om\setminus \Gamma) \asymp y_A \bigl(\log\frac1{y_A}\bigr)^\kappa$. Hence, 
\begin{equation}
\label{xA2}
-\int_{\pd\Om\setminus\Gamma} \log |g(z)|\, d\om(z)\asymp y_A \Bigl(\log\frac1{y_A}\Bigr)^\kappa\cdot \frac{\log\frac{1}{y_A}}{y_A}= o(A),
\end{equation}
for suitable $x_A$ to be chosen later on. 

Next, let us require that 
\begin{equation}
\label{xA3}
-\int_{\Gamma} \log |g(z)|\, d\om(z) \asymp A.
\end{equation}
This integral is equivalent (see \eqref{Wa1}) to
$$
\int_{y_A}^1 \frac{\log\frac{1}{s}}{s}\Bigl( \log\frac1{s}\Bigr)^\kappa\,ds=\frac1{2+\kappa}\,\Bigl(\log\frac1{y_A}\Bigr)^{2+\kappa}\,.
$$
Finally, we choose $x_A$ by the equality
$$
\int_{y_A}^1 \frac{\log\frac{1}{s}}{s}\Bigl( \log\frac1{s}\Bigr)^\kappa\,ds=A.
$$ 
Then \eqref{xA2} and \eqref{xA3} are true and \eqref{xA1} becomes true if we choose the number $b$ in the definition of $g$ appropriately.

Thus, we have 
$$
\log\frac1{x_A}\asymp A^{\frac12-\tau}\,,
$$
with $\tau=\eps/(2(\pi+\eps))$.

Since $g$ is outer in $\Om$, we have
$$
-\log|g(1)|= -\int_{\pd\Om} \log |g(z)| \,d\om(z) = A.
$$

\subsection*{How smooth is $g$?}
\label{sm}
We claim that $g|\bC_+$ extends to a function $\tilde g$ which is $C^1$-smooth in the whole complex plane,   
\begin{equation}
\label{dbar1}
|\bar\pd \tilde g(z)|\le Ce^{-\frac{C_1}{|\Re z|}},\, -1<\Re z<0,
\end{equation}
for some absolute constants $C,C_1$, and $\tilde g(z)$ vanishes for $\Re z\le -1$.

Indeed, consider a smooth function $\psi$ such that $\psi(x+iy)=1$ on $\{x+iy:x\ge -h_*(y)/2\}$, $\psi(x+iy)=0$ on $\{x+iy:x\le -h_*(y)\}$, and $0\le\psi\le 1$ everywhere. We can find such $\psi$ with 
\begin{equation}
|\bar\pd\psi(x+iy)|\le \frac{C}{h_*(y)},\qquad x+iy\in\mathbb C.
\label{do25}
\end{equation}
Furthermore, an easy estimate of harmonic measure gives us that 
\begin{equation}
\label{do26}
|g(x+iy)| \le C e^{-\frac{C_1}{h_*(y)}},\quad -h_*(y)\le x\le -h_*(y)/2,
\end{equation}
with some absolute constants $C,C_1$. 
Put $\tilde g := \psi\,g$. Now, property \eqref{dbar1} follows from \eqref{do25} and \eqref{do26}.

\subsection*{Imposing zeros}
\label{zeros}
By a linear fractional transformation, we can transfer $g$ to $\bD$ and its extension $\tilde g$  to $\bD(2)$. 
Then $g$ belongs to the class $\mathfrak C_{0}$ and $g(0)=e^{-A}$.  The only problem is that our $g$ is an outer function and so has no zeros whatsoever. On the other hand, $g$ is very small on the arc $I_A$ centered at the point $1\in \bT$ of length $2y_A$.
In fact, 
$$
|g(\zeta)| \le e^{-\frac{C}{x_A}}\le e^{-C_* e^{A^{\frac12-\tau}} },\qquad \zeta\in I_A,
$$   
for some absolute constants $C,C_*>0$. 

For $N\ge 1$ to be chosen later on, let the points $\{x_j\}_{j=1}^N$ divide $I_A$ into $N$ equal arcs of length $\frac{2y_A}{N}$.
Set $\ell(z)=\Pi_{j=1}^N (z-x_j)$, and let $L$ be the Lagrange interpolation polynomial that interpolates the function 
$g$ at the points $\{x_j\}_{j=1}^N$:
$$
L(z)= \ell(z)\sum_{j=1}^N \frac{g(x_j)}{\ell'(x_j)(z-x_j)}\,.
$$

To estimate $|L(0)|$ we use the equalities $|\ell(0)/x_j|=1$, $1\le j\le N$, and a lower bound for $|\ell'(x_j)|$: 
$$
|\ell'(x_j)|\ge (2y_A/N)^N.
$$
Now, 
$$
|L(0)|\le N e^{-C_* e^{A^{\frac12-\tau}} } e^{N\log N} e^{C_1 NA^{\frac12-\tau}},
$$
for some absolute constant $C_1$.

Choose
$$
N= \Bigl\lfloor e^{A^{\frac12-2\tau}}\Bigr\rfloor.
$$
Then $L(0)\le e^{-A}/2$ for $A\ge A(\eps)$. 

Set $f=g-L$. Then $|f(0)|\ge e^{-A}/2$ and $f$ has $N$ zeros in the closed unit disc.

Notice that our argument for estimating $L(0)$ works also for $L(z)$. In the same way we obtain that 
$$
|L(z)|\le  N3^{N} e^{-C_* e^{A^{\frac12-\tau}} } e^{N\log N} e^{C_1NA^{\frac12-\tau}},\qquad |z|=2,
$$
and 
$$
|L(z)|\le e^{-A}/2,\qquad |z|=2,\,A\ge A_1(\eps).
$$
By the maximum principle, we conclude that 
$$
\sup_{\mathbb D(2)}|L|\le e^{-A}/2,\qquad A\ge A_1(\eps).
$$

Now consider a cut-off smooth function $\Psi$ equal to $1$ in $\bD(3/2)$ and zero outside $\bD(2)$ and put
$\tilde f = \tilde g- \Psi\, L$;
This function extends $f$, 
$$
|\bar{\pd}\tilde f(z)| \le Ce^{-\frac{C_1}{|z|-1}},\qquad z\in \bD(2)\setminus \bD,
$$
for some absolute constants $C,C_1>0$, 
and $n_f\gtrsim e^{A^{\frac12-2\tau}}$, $A\ge A_1(\eps)$.
\end{proof}


\section{Application to non-selfadjoint Jacobi matrices}\label{coeff}

\subsection{Proof of Proposition~\ref{jostbound}}

The first ingredient in the proof of Proposition \ref{jostbound} is the following 
result, which estimates the coefficients $\delta_j$ in terms of the numbers $a_n$, $b_n$, and $c_n$. In the self-adjoint case it appears, e.g., in Section 10.1 of \cite{Te}. The same proof works in the non-selfadjoint case, where it appears, e.g., as Theorem 2.3 in \cite{GoEg}. Set 
$$
H(N) := \sum_{n=N}^\infty \bigl( |2b_n| + |4 a_n c_n -1| \bigr).
$$

\begin{lemma}\label{jostabstract}
For every $j\geq 1$,
$$
|\delta_j| \le H([j/2]) \Bigl( \prod_{n=0}^\infty (1+H(n)) \Bigr) \,,
$$
where $[j/2]$ is the integer part of $j/2$.
\end{lemma}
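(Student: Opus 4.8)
The plan is to express $\Delta(z)$ as a determinant and expand it combinatorially. Writing $w = (z+z^{-1})/2$ and $T(w) = (J-w)(J_0-w)^{-1} = I + (J-J_0)(J_0-w)^{-1}$, the perturbation determinant is $\Delta(z) = \det(I + K(z))$ with $K(z) = (J-J_0)(J_0-w)^{-1}$ trace class. Using the standard expansion $\det(I+K) = \sum_{m\ge 0} \tr(\Lambda^m K)$, where $\Lambda^m K$ is the $m$-th exterior power, one gets $\Delta(z) = \sum_{m\ge 0} \frac{1}{m!} \sum_{i_1,\dots,i_m} \det\bigl( K(z)_{i_k i_l} \bigr)_{k,l=1}^m$. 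The key structural point, which is exactly the content of the cited computation in Section 10.1 of \cite{Te} and Theorem 2.3 of \cite{GoEg}, is that because $J - J_0$ is tridiagonal, each entry $K(z)_{ij}$ is a Laurent polynomial in $z$ whose lowest-order term in $z$ has degree at least $|i-j| + (\text{something involving the larger index})$; more precisely the entries are built from the resolvent kernel of $J_0$, which decays like $z^{|i-j|}$, together with the tridiagonal weights $2b_n$ and $4a_nc_n-1$. Carrying this bookkeeping through, the coefficient $\delta_j$ of $z^j$ is a sum, over tuples of indices, of products of the quantities $2b_n$ and $4a_nc_n - 1$, where the total ``reach'' of the indices appearing forces at least one index to be $\ge [j/2]$ and the sum of all index-contributions to be $\ge$ (something like) $j$.

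Concretely, I would follow the reference and obtain a formula of the shape
\begin{equation*}
\delta_j = \sum \pm \prod_{n} \bigl( \text{a factor among } 2b_n,\ 4a_nc_n-1 \bigr),
\end{equation*}
where the sum ranges over finite increasing sequences of indices with appropriate multiplicities, subject to the constraint that the largest index is at least $[j/2]$. From this one estimates
\begin{equation*}
|\delta_j| \le \Bigl( \sum_{n \ge [j/2]} \bigl( |2b_n| + |4a_nc_n-1|\bigr) \Bigr) \prod_{n=0}^\infty \Bigl( 1 + |2b_n| + |4a_nc_n - 1| \Bigr) = H([j/2]) \prod_{n=0}^\infty (1 + H(n))?
\end{equation*}
Here I must be a little careful: the natural bound produces $\prod_n (1 + |2b_n| + |4a_nc_n-1|)$, and one then crudely bounds each factor $1 + |2b_n| + |4a_nc_n - 1| \le 1 + H(n)$ (since $H(n) \ge |2b_n| + |4a_nc_n-1|$), giving the stated product $\prod_{n=0}^\infty (1 + H(n))$. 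The factor $H([j/2])$ comes from isolating the one guaranteed ``large'' index and summing the corresponding factor over all admissible values $n \ge [j/2]$, while the remaining factors are absorbed into the (convergent, by the $\ell^1$ hypothesis) infinite product.

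The main obstacle is purely organizational rather than conceptual: one must verify the degree/reach count carefully enough to conclude both that every monomial in $\delta_j$ contains a factor with index $\ge [j/2]$ \emph{and} that the combinatorial sum of all the other monomial-contributions telescopes into the infinite product without overcounting. This is precisely the computation performed in \cite{Te} and \cite{GoEg}, so in the write-up I would either reproduce their argument or, more economically, cite it and only indicate why the tridiagonal structure of $J - J_0$ yields the index bound $\ge [j/2]$ — the point being that to ``move'' a power $z^j$ out of the resolvent of $J_0$ one needs index displacements summing to at least $j$, and with the indices starting near $0$ this forces the maximal index to be at least $j/2$. Since no quantitative sharpness in the constants is needed here (Proposition \ref{jostbound} will only use this qualitative shape), the crude bound above suffices and I would not optimize it.
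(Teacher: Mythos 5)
The paper does not actually prove this lemma; it explicitly attributes it to Section~10.1 of \cite{Te} in the self-adjoint case and Theorem~2.3 of \cite{GoEg} in the non-selfadjoint case, so there is no in-text proof to compare against. Since you also ultimately defer to those references, your proposal and the paper agree on that point. What I would flag, though, is that your sketched route and its heuristic do not line up with what those references do, and the heuristic itself is not quite right.

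First, the references bound the Taylor coefficients not through the Fredholm minor expansion of $\det(I+K(z))$, but by constructing the Jost solution $u_n(z)=z^n\bigl(1+\sum_{m\ge 1}\kappa(n,m)z^m\bigr)$ via a discrete transformation operator (Volterra-type iteration) and then reading off $\delta_j$ from the normalized Jost function. The index constraint and the telescoping into $\prod_n(1+H(n))$ fall out cleanly from the Volterra recursion for $\kappa(n,m)$; with the raw determinant expansion, the ``no overcounting'' step you acknowledge as ``organizational'' is in fact the whole difficulty and would require a separate combinatorial lemma.

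Second, your heuristic for the origin of the exponent $[j/2]$ --- that ``to move a power $z^j$ out of the resolvent of $J_0$ one needs index displacements summing to at least $j$'' --- is the whole-line picture and is misleading here. For the half-line free operator the resolvent kernel is
$$
G_0(n,m;w)=-2\bigl(z^{|n-m|+1}+z^{|n-m|+3}+\cdots+z^{n+m+1}\bigr),
$$
so a single matrix element $G_0(n,m;w)$ already contributes $z$-powers up to $n+m+1$ via the boundary reflection, with no displacement needed. It is this reflection term, not displacement, that forces $\max(n,m)\gtrsim j/2$ in any term contributing to $\delta_j$; the diagonal entry $K(z)_{nn}$ alone reaches degree $\approx 2n$. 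So while your phrase ``something involving the larger index'' was pointing in the right direction, the explicit displacement argument you then give does not establish the $[j/2]$ threshold and would not survive if written out. Since the paper itself only cites, the safe course is to do exactly what it does: state the lemma and cite \cite{Te,GoEg}, without attempting a from-scratch determinant proof.
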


The second ingredient ingredient we need is an elementary bound on exponential sums.

\begin{lemma}\label{expsum}
Let $1/2\le\gamma\le 1$. Then for all $B>0$ and $N\geq 0$ we have
$$
\sum_{n=N}^\infty e^{-B n^\gamma}\le C\Bigl( 1+ B^{-1/\gamma} \Bigl(1+ \bigl(BN^\gamma\bigr)^{(1/\gamma)-1} \Bigr) \Bigr) e^{-B N^\gamma}
$$
with an absolute constant $C$. In particular, for all $B>0$, $N\geq 0$ and $c\in(0,1)$ we have
$$
\sum_{n=N}^\infty e^{-B n^\gamma} \le C_{c}  (1+B^{-1/\gamma})e^{-c B N^\gamma} \,,
$$
where $C_{c}$ depends only on $c$.
\end{lemma}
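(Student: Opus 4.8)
The plan is to prove the first (main) inequality directly by splitting the sum at $N$ and at the point where the summand starts to decay geometrically, and then to derive the second inequality as an easy corollary.

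First I would dispose of a trivial case: if $BN^\gamma \le 1$, then $e^{-BN^\gamma}$ is bounded below by $1/e$, so it suffices to bound $\sum_{n=N}^\infty e^{-Bn^\gamma}$ by $C(1+B^{-1/\gamma})$. Breaking this sum at $n_0 := \lceil B^{-1/\gamma}\rceil$, the first block has at most $1+B^{-1/\gamma}$ terms each bounded by $1$, and for $n \ge n_0$ we have $Bn^\gamma \ge (n/n_0)^\gamma \gtrsim$ a quantity growing with $n$, so the tail is a convergent geometric-type series bounded by an absolute constant times $B^{-1/\gamma}$ (or by $1$). So assume $BN^\gamma \ge 1$. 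Write $m = N + \lfloor N^{1-\gamma}/B \rfloor$ (the natural ``doubling scale'' for the exponent). For $N \le n \le m$: by concavity of $t \mapsto t^\gamma$ we have $n^\gamma \ge N^\gamma$, so each term is $\le e^{-BN^\gamma}$, and there are at most $1 + N^{1-\gamma}/B$ of them; since $BN^\gamma \ge 1$ gives $N^{1-\gamma}/B \le B^{-1/\gamma}(BN^\gamma)^{(1/\gamma)-1}$, this block contributes at most $C(1 + B^{-1/\gamma}(BN^\gamma)^{(1/\gamma)-1})e^{-BN^\gamma}$, which is of the required form.

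Next, the tail $n > m$. Here the key estimate is that for $n \ge m$ the exponent has grown enough to produce genuine geometric decay: using $n^\gamma - N^\gamma \ge \gamma N^{\gamma-1}(n-N)$ (mean value / convexity) and $n - N \ge N^{1-\gamma}/B$ at $n = m$, one finds $B(n^\gamma - N^\gamma)$ grows at least linearly in $(n-N)/(N^{1-\gamma}/B)$, i.e. $e^{-Bn^\gamma} \le e^{-BN^\gamma} e^{-c(n-m)/(N^{1-\gamma}/B)}$ for $n \ge m$ with an absolute $c>0$. Summing the geometric series gives a tail bound $\le C (N^{1-\gamma}/B) e^{-BN^\gamma} \le C B^{-1/\gamma}(BN^\gamma)^{(1/\gamma)-1} e^{-BN^\gamma}$, again of the required form. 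Adding the two blocks and the $BN^\gamma\le1$ case yields the first inequality. I would double-check the edge case $\gamma = 1$ separately (the sum is exactly geometric there, $\sum_{n\ge N}e^{-Bn} = e^{-BN}/(1-e^{-B})$, and $1/(1-e^{-B}) \le C(1+B^{-1})$), and note that for $\gamma=1/2$ the factor $(BN^\gamma)^{(1/\gamma)-1} = BN^{1/2}$ and the bound reads $C(1+B^{-2}(1+BN^{1/2}))e^{-BN^{1/2}}$.

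For the ``in particular'' statement: given $c \in (0,1)$, write $e^{-Bn^\gamma} = e^{-cBn^\gamma} e^{-(1-c)Bn^\gamma} \le e^{-cBN^\gamma} e^{-(1-c)Bn^\gamma}$ for $n \ge N$ (using $n^\gamma \ge N^\gamma$ only in the first factor). Summing and applying the first inequality with $N = 0$ and $B$ replaced by $(1-c)B$ bounds $\sum_{n\ge0} e^{-(1-c)Bn^\gamma}$ by $C_c(1 + ((1-c)B)^{-1/\gamma}) \le C_c'(1+B^{-1/\gamma})$, giving the claim with $C_c$ depending only on $c$. The main obstacle is purely bookkeeping: getting the constants in the two-scale split to land exactly in the stated form $1 + B^{-1/\gamma}(1 + (BN^\gamma)^{(1/\gamma)-1})$ uniformly over $\gamma \in [1/2,1]$, in particular controlling the $\gamma$-dependent constants (like $\gamma$ in the convexity bound) by absolute ones using $\gamma \ge 1/2$; there is no conceptual difficulty beyond that.
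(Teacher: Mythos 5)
Your proof has a genuine gap in the tail estimate. You claim
$$n^\gamma - N^\gamma \ge \gamma N^{\gamma-1}(n-N) \qquad (n \ge N),$$
but for $\gamma < 1$ concavity of $t\mapsto t^\gamma$ gives the \emph{opposite} inequality: the tangent line at $N$ lies above the graph, so $n^\gamma - N^\gamma \le \gamma N^{\gamma-1}(n-N)$, and the mean value theorem gives $n^\gamma - N^\gamma = \gamma\xi^{\gamma-1}(n-N)$ with $\xi\in(N,n)$, hence $\xi^{\gamma-1}<N^{\gamma-1}$. The correct lower bound is $n^\gamma - N^\gamma \ge \gamma\, n^{\gamma-1}(n-N)$, and $n^{\gamma-1}\to 0$ as $n\to\infty$, so the claimed pointwise geometric decay
$$e^{-Bn^\gamma}\le e^{-BN^\gamma}e^{-c(n-m)BN^{\gamma-1}}$$
actually fails once $n$ is much larger than $N$. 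Concretely, for $\gamma=1/2$, $B=1$, $n=N^2$, $m\approx N+N^{1/2}$ you would need $N - N^{1/2}\ge c\,N^{3/2}$, which is false for large $N$. The stream $e^{-Bn^\gamma}$ decays \emph{subgeometrically} at the scale $N^{1-\gamma}/B$, so summing a geometric series with that ratio does not legitimately bound the tail, even though the final estimate $\sum_{n>m}e^{-Bn^\gamma}\lesssim (N^{1-\gamma}/B)\,e^{-BN^\gamma}$ you want is in fact true.

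The paper avoids this issue entirely: it bounds the sum by $e^{-BN^\gamma}+\int_N^\infty e^{-Bx^\gamma}\,dx$, changes variables $y = Bx^\gamma$, and applies the elementary inequality $\int_Y^\infty e^{-y}y^\alpha\,dy\le 2e(1+Y^\alpha)e^{-Y}$ for $0\le\alpha\le 1$ (proved by one integration by parts). Your approach could be repaired along similar lines: keep your first block $N\le n\le m$, where the term count is the whole content, but for the tail either compare directly to $\int_m^\infty e^{-Bx^\gamma}\,dx$, or split at $2N$ and use $n^\gamma - N^\gamma\ge (\gamma/2)N^{\gamma-1}(n-N)$ for $N\le n\le 2N$ (where $n^{\gamma-1}\ge 2^{\gamma-1}N^{\gamma-1}$) together with $n^\gamma - N^\gamma\ge (1-2^{-\gamma})n^\gamma$ for $n\ge 2N$, handling the far tail as a sum over all $n\ge 0$ with the smaller effective constant. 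Your handling of the case $BN^\gamma\le 1$ and your derivation of the ``in particular'' statement from the main inequality are both fine.
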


\begin{proof}[Proof of Lemma \ref{expsum}.]
By monotonicity, we have
\begin{multline*}
\sum_{n=N}^\infty e^{-B n^\gamma} \leq e^{-B N^\gamma} + \int_N^\infty e^{-B x^\gamma}\,dx \\= e^{-B N^\gamma} + \frac1{\gamma B^{1/\gamma}} \int_{BN^\gamma}^\infty e^{-y} y^{(1/\gamma) -1}\,dy \,.
\end{multline*}
We now use the fact that for $0\leq\alpha\leq 1$,
\begin{equation}
\int_Y^\infty e^{-y} y^\alpha \,dy \le 2e (1+ Y^\alpha) e^{-Y} \qquad Y\ge 0.
\label{X13}
\end{equation}
Indeed, we have
\begin{multline*}
\int_Y^\infty e^{-y} y^\alpha \,dy  = e^{-Y} Y^\alpha + \alpha \int_Y^\infty e^{-y} y^{\alpha-1}\,dy \\
\le e^{-Y} Y^\alpha + \alpha Y^{\alpha-1} \int_Y^\infty e^{-y}\,dy = (1+\alpha Y^{-1}) e^{-Y} Y^\alpha\\ \le (1+ Y^\alpha) e^{-Y}, \qquad Y\ge 1.
\end{multline*}
Furthermore, 
$$
\int_Y^\infty e^{-y} y^\alpha \,dy \leq \int_0^\infty e^{-y} y^\alpha \,dy =\Gamma(\alpha+1) \le 2e (1+ Y^\alpha) e^{-Y}, \,\,\,\, 0\le Y\le 1.
$$
Together, these two inequalities prove \eqref{X13}.

Applying \eqref{X13} with $\alpha =(1/\gamma) -1$,  
we obtain
$$
\int_{BN^\gamma}^\infty e^{-y} y^{(1/\gamma)-1}\,dy \le 2e \Bigl(1+ \bigl(BN^\gamma\bigr)^{(1/\gamma)-1} \Bigr) e^{-B N^\gamma}
$$
and therefore
$$
\sum_{n=N}^\infty e^{-B n^\gamma} \le C \Bigl( 1+ B^{-1/\gamma} \Bigl(1+ \bigl(BN^\gamma\bigr)^{(1/\gamma)-1} \Bigr) \Bigr) e^{-B N^\gamma} 
$$
for some absolute constant $C$.
\end{proof}

To complete the proof of Proposition~\ref{jostbound}, 
we combine Lemmas~\ref{jostabstract} and \ref{expsum}. Fix $0<c<1$. By assumption, we have
$$
H(N) \le D \sum_{n=N}^\infty e^{-B n^\gamma},\qquad N\ge 0,
$$
and therefore Lemma~\ref{expsum} implies that
$$
H(N) \le  C_cD(1+B^{-1/\gamma}) e^{-cBN^\gamma}, \qquad N\ge 0.
$$

Moreover, using the estimate $\log (1+x)\leq x$ we obtain that 
$$
\log \Bigl( \prod_{n=0}^\infty (1+H(n)) \Bigr) = \sum_{n=0}^\infty \log (1+H(n)) \leq \sum_{n=0}^\infty H(n) \,.
$$
Therefore, from the bound we have just derived, we get 
$$
\log \Bigl( \prod_{n=0}^\infty (1+H(n)) \Bigr) \le C_cD(1+B^{-1/\gamma}) \sum_{n=0}^\infty e^{-cB n^\gamma} \,.
$$
Applying again Lemma \ref{expsum} we find
$$
\log \Bigl( \prod_{n=0}^\infty (1+H(n)) \Bigr) \le C'_cD(1+B^{-2/\gamma})\,.
$$
In view of Lemma~\ref{jostabstract} these bounds imply the proposition.


\section{Smooth extensions with estimates on $\bar{\pd}$. Dyn'kin construction}
\label{extension}

At the beginning of the 1970-s Dyn'kin proposed a general approach of representing functions in different smoothness classes as traces 
of asymptotically holomorphic functions, that is, functions satisfying some quantitative restrictions on the $\bar\pd$-derivative. 

In particular, it follows from the results in \cite{Dy1} that 
$$
\mathfrak A_\beta=\mathfrak C_\beta,\qquad \beta\ge 0.
$$

Here we give a short proof of (a quantitative version of) the inclusion $\mathfrak A_\beta\subset\mathfrak C_\beta$. The opposite inclusion is not needed in this paper, but we give a proof of it after the proof of the theorem.



\begin{theorem}
\label{thext_small_af}
Let $0\le\beta\le\beta_0$ and let $f$ be analytic in the unit disc and satisfy \eqref{fhatn} with $a'_f= 1$. 
Then $f$ 
extends to a $C^1$-smooth function with compact support in 
$\mathbb D(2)$ (we denote this extension by the same symbol $f$) in such a way that
$$
f(z) =\frac1{\pi} \int_{\mathbb D(2)\setminus \mathbb D}\frac{\bar\pd f(\zeta)}{z-\zeta} \,dm_2(\zeta)\,,
$$
and
\begin{equation}
\label{om}
|\bar\pd f(z)| \le d'_f\rho_\beta( d_f(|z|-1)), \qquad z\in \mathbb D(2)\setminus \mathbb D,
\end{equation}
where
$$
\rho_\beta(x) = e^{-\frac{1}{x^{1+\beta}}} \,,
$$
and
$$
d_f = Ca_f^{-\frac{2+\beta}{1+\beta}} \,,\qquad 
d'_f = C_1a_f^{-2\frac{2+\beta}{1+\beta}}  \,,
$$
with some $C,C_1$ depending only on $\beta_0$.
\end{theorem}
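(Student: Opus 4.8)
The plan is to realize $f$ as a Cauchy transform of its $\bar\pd$-derivative after multiplying by a suitable cutoff, following the classical Dyn'kin philosophy but with explicit control of constants. First I would truncate the Taylor series at a level depending on the distance to $\mathbb T$: for a point $z$ with $1<|z|<2$, write $r=|z|-1$ and split $f=f_{<K}+f_{\ge K}$ where $f_{<K}(z)=\sum_{n<K}\widehat f(n)z^n$ is a holomorphic polynomial and $K=K(r)$ will be chosen of order $r^{-\frac{1}{2+\beta}}$ (up to the scaling factor $a_f^{2/(1+\beta)}$). The tail $f_{\ge K}$ satisfies, on the circle $|z|=1+r$,
\begin{equation*}
|f_{\ge K}(z)|\le \sum_{n\ge K} e^{-a_f n^{(1+\beta)/(2+\beta)}}(1+r)^n \le 2\,e^{-a_f K^{(1+\beta)/(2+\beta)}}e^{Kr}
\end{equation*}
once $K$ is large enough that the geometric-type series converges, and optimizing the exponent $-a_f K^{(1+\beta)/(2+\beta)}+Kr$ over $K$ gives a bound of the form $\exp(-c\,(a_f^{(2+\beta)/(1+\beta)} r)^{-(1+\beta)})$, i.e. essentially $\rho_\beta(d_f r)$ with $d_f\asymp a_f^{-(2+\beta)/(1+\beta)}$. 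This is the heart of the matter: the Gevrey decay of the coefficients is exactly what converts into the $\rho_\beta$ decay of the remainder near the boundary.

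Next I would glue these truncations into one genuinely smooth function. Choose a smooth partition-type construction adapted to the dyadic annuli $A_k=\{1+2^{-k-1}\le|z|\le 1+2^{-k}\}$, or more simply take a single smooth cutoff $\chi$ supported in $\mathbb D(2)$, equal to $1$ on $\overline{\mathbb D(3/2)}$, and define the extension by an averaged/interpolated formula so that near $\mathbb T$ it agrees with $f_{<K(r)}$ and away from $\mathbb T$ it is cut off to have compact support. The cleanest route is: on $\mathbb D(2)\setminus\mathbb D$ set $\tilde f(z)=\chi(z)\,f_{<K(|z|-1)}(z)$ for an appropriate smooth choice of $K$ as a function of $|z|-1$ (replace the integer truncation level by a smooth real parameter, using a smoothed partial-sum kernel). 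Then $\bar\pd\tilde f$ has two contributions: one from $\bar\pd\chi$, which is supported in $3/2\le|z|\le 2$ where $f_{<K}$ is bounded by an absolute constant (so this term is harmless and even compactly supported away from $\mathbb T$), and one from $\bar\pd$ hitting the $|z|$-dependence of the truncation level, which by the estimate above is $\lesssim$ (derivative of $K$ in $r$) times $|f_{\ge K}|$ on that annulus, hence bounded by $d'_f\,\rho_\beta(d_f(|z|-1))$ with $d'_f\asymp a_f^{-2(2+\beta)/(1+\beta)}$ (the extra power of $d_f$ coming from the $dK/dr\asymp r^{-1-1/(2+\beta)}$ factor, which is polynomially absorbed into the exponential). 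Inside $\mathbb D$ one simply keeps $f$ holomorphic, so $\bar\pd\tilde f=0$ there.

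Finally, the Cauchy–Green (Cauchy–Pompeiu) formula applied to $\tilde f$ on $\mathbb D(2)$, using that $\tilde f$ has compact support in $\mathbb D(2)$ and is holomorphic in $\mathbb D$, yields
\begin{equation*}
\tilde f(z)=\frac1\pi\int_{\mathbb D(2)\setminus\mathbb D}\frac{\bar\pd\tilde f(\zeta)}{z-\zeta}\,dm_2(\zeta),\qquad z\in\mathbb D,
\end{equation*}
which is the claimed representation, and the $\bar\pd$-bound \eqref{om} with the stated $d_f,d_f'$ is exactly what the previous paragraph produced. I expect the main obstacle to be the bookkeeping in step two: one must choose the smoothed truncation level $K(r)$ and verify that differentiating it does not destroy the $\rho_\beta$-type bound, and that the $C^1$ (indeed $C^\infty$) regularity across the gluing region $|z|\approx 3/2$ and near $\mathbb T$ is genuine rather than merely formal; handling this cleanly is why a smoothed partial-sum kernel (convolving the Dirichlet-type projection with a bump) is preferable to a hard cutoff in frequency. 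Everything else — the rearrangement estimate for the Cauchy kernel, the choice of $\chi$, the optimization over $K$ — is routine once this is set up. I would also note that tracking the dependence on $\beta$ only through $\beta_0$ is automatic since all the exponents above are monotone and bounded for $\beta\in[0,\beta_0]$.
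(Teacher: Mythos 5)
Your plan is the same Dyn'kin-type construction the paper uses: build the extension by truncating the Taylor series at a depth that depends on $|z|-1$, glue with annular cutoffs, and control $\bar\pd$ by the size of the coefficients near the truncation level. The paper implements this with a \emph{discrete} dyadic partition of unity $\{\varphi_m\}$ supported in $\mathbb D(1+2^{-m})\setminus\overline{\mathbb D(1+2^{-m-1})}$ together with dyadic truncation levels $N(m)=2^{(2+\beta)m}\gamma$, $\gamma\asymp a_f^{2+\beta}$, setting $f=\sum_m\varphi_m(S_m-S_{m+1})$; each block $S_m-S_{m+1}$ is then a polynomial, and the only contribution to $\bar\pd f$ is $(\bar\pd\varphi_m)(S_m-S_{m+1})$ on a single annulus. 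Your continuous version with one cutoff $\chi$ and a smoothly varying $K(|z|-1)$ through a smoothed partial-sum kernel is a legitimate variant of the same idea, but, as you yourself note, it requires the extra apparatus of the smoothed kernel and a regularity check across $\partial\mathbb D$; the discrete dyadic scheme makes both automatic.

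Two concrete problems need to be fixed. First, the displayed tail bound is not available: on $|z|=1+r>1$ the series $\sum_{n\ge K}e^{-a_f n^{(1+\beta)/(2+\beta)}}(1+r)^n$ diverges for every $K$ and every $r>0$, because the coefficient decay is sub-exponential while $(1+r)^n$ grows exponentially, so there is no ``$K$ large enough that the geometric-type series converges.'' What is actually controlled is a \emph{finite} block: in the paper $S_m-S_{m+1}=\sum_{N(m)\le k<N(m+1)}\widehat f(k)z^k$, and in your variant the single term $|\widehat f(K)|(1+r)^K$ multiplied by $dK/dr$. Second, the exponents are off. Minimizing $-a_fK^{(1+\beta)/(2+\beta)}+Kr$ gives $K_{\mathrm{opt}}\asymp(a_f/r)^{2+\beta}$, not $a_f^{2/(1+\beta)}r^{-1/(2+\beta)}$, and the value at the optimum is $\asymp -a_f^{2+\beta}r^{-(1+\beta)}$. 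Your stated bound $\exp\bigl(-c(a_f^{(2+\beta)/(1+\beta)}r)^{-(1+\beta)}\bigr)=\exp\bigl(-c\,a_f^{-(2+\beta)}r^{-(1+\beta)}\bigr)$ therefore carries the wrong sign of $a_f$ in the exponent and is inconsistent with the $d_f\asymp a_f^{-(2+\beta)/(1+\beta)}$ you assert immediately afterwards. Correspondingly $dK/dr\asymp a_f^{2+\beta}r^{-(3+\beta)}$, not $r^{-1-1/(2+\beta)}$, and to identify $d_f'$ one must maximize $a_f^{2+\beta}r^{-(3+\beta)}\exp(-\delta\,a_f^{2+\beta}r^{-(1+\beta)})$ over $r\in(0,1]$: the maximum is attained at $r^{1+\beta}\asymp a_f^{2+\beta}$ and equals $\asymp a_f^{-2(2+\beta)/(1+\beta)}$, which is exactly the paper's $d_f'$. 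The negative power of $a_f$ in $d_f'$ comes from this optimization over $r$, not from ``absorbing a polynomial factor'' at fixed $r$, so it should be carried out explicitly.
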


\begin{proof}
Let 
$$
\gamma=\Bigl(\frac{a_f}{2^{2+\beta}}\cdot\frac{1+\beta}{2+\beta}\Bigr)^{2+\beta}.
$$ 
Set
$$
N(0)=0,\quad N(m)=2^{(2+\beta)m}\gamma,\quad m\ge 1.
$$
Consider $S_m= \sum_{k\ge N(m)}^\infty \hat f(k) z^k$, the tail of the Taylor series of $f$.

Let $\varphi_m$ denote the $C^1$ smooth function equal to $0$ on $\mathbb C\setminus \mathbb D(1+2^{-m})$, equal to $1$
on $\mathbb D(1+2^{-m-1})$ and such that $\nabla\varphi_m$ has compact support in $\mathbb D(1+2^{-m})\setminus \overline{\mathbb D(1+2^{-m-1})}$ 
and $|\bar\pd \varphi_m|\le C2^m$, $m\ge 0$.

Now define
\begin{equation}
\label{fext}
f = \sum_{m\ge 0}^\infty \varphi_m \cdot (S_m - S_{m+1}).
\end{equation}
In particular, on the unit circle this sum is just $f=\sum_{m\ge 0} (S_m - S_{m+1} )$. 
Thus, formula \eqref{fext} gives an extension of our original function $f$ to $\mathbb D(2)\setminus \mathbb D$. 
Furthermore, this extended $f$ has compact support in $\mathbb D(2)$. 

Let us estimate the $\bar\pd$-derivative of $f$.
If $z$ belongs to $\mathbb D(1+2^{-m})\setminus \overline{\mathbb D(1+2^{-m-1})}$, then only one $\bar\partial\varphi_k$ 
(namely $\bar\partial\varphi_m$) is not $0$. 
The terms $\varphi_m \cdot (S_m - S_{m+1})$, $k\ne m$, obviously give zero contribution to $\bar\pd f$, 
because  $S_k-S_{k+1}$ are just analytic polynomials.

Thus, if $z\in \mathbb D(1+2^{-m})\setminus \overline{\mathbb D(1+2^{-m-1})}$, then
\begin{gather*}
|\bar\pd f(z)| \le C2^m |S_m(z)-S_{m+1}(z)| \\ \le C2^m \sum_{2^{(2+\beta)m}\gamma\le s<2^{(2+\beta)(m+1)}\gamma} 
e^{-a_fs^{\frac{1+\beta}{2+\beta}}} (1+ 2^{-m})^s\\
\le C\gamma2^{m+(2+\beta)(m+1)} \exp\bigl(-a_f2^{(1+\beta)m}\gamma^{\frac{1+\beta}{2+\beta}}+2^{-m+(2+\beta)(m+1)}\gamma\bigr)
\\=C\gamma2^{(3+\beta)m+2+\beta} \exp\Bigl(- \frac{1}{2+\beta}a_f\gamma^{\frac{1+\beta}{2+\beta}}  2^{(1+\beta)m} \Bigr).   
\end{gather*}
Thus, 
$$
|\bar\pd f(z)| \le u_fe^{ -v_f 2^{(1+\beta)(m+1)}},\quad z\in \mathbb D(1+2^{-m})\setminus \overline{\mathbb D(1+2^{-m-1})},\,m\ge 0,
$$
with 
$$
v_f=Ca_f^{2+\beta} \,,\qquad 
u_f=C_1a_f^{-(4+2\beta)/(1+\beta)} \,,
$$
with some $C,C_1$ depending only on $\beta_0$.
This proves \eqref{om}.

By construction, $f$ has compact support in $\mathbb D(2)$, and hence, Green's formula allows us to restore $f(z)$ as follows:
$$
f(z)=\frac1{\pi} \int_{\mathbb D(2)} \frac{\bar\pd f(\zeta)}{z-\zeta}\,dm_2(\zeta) = \frac1{\pi} \int_{\mathbb D(2)\setminus \mathbb D} \frac{\bar\pd f(\zeta)}{z-\zeta}\,dm_2(\zeta) \,.
$$
We are done.
\end{proof}

\begin{remark}
In the opposite direction, if $0\le\beta\le\beta_0$ and if $f$ is analytic in the unit disc and satisfies \eqref{om}, then it satisfies \eqref{fhatn} with
$$
a_f = C d_f^{-\frac{1+\beta}{2+\beta}} \,,
\qquad
a_f' = C_1 d_f'
$$
with some $C$, $C_1$ depending only on $\beta_0$.
\end{remark}

\begin{proof}
Indeed, in this case, for every $0<\eps<1$ we have 
\begin{multline*}
|\hat f(n)|=\Bigl| \frac1{2\pi}\int_{\partial\bD} f(z)z^{-n-1}\,dz\Bigr|\\=\Bigl| \frac1{2\pi}\int_{\partial\bD(1+\eps)} f(z)z^{-n-1}\,dz-
\frac1{\pi}\int_{\bD(1+\eps)\setminus\bD} \bar\partial f(z)z^{-n-1}\,dm_2(z)\Bigr|\\ \le 
\frac{Cd'_f}{(1+\eps)^n}+2d'_f\rho_\beta( d_f\eps),\qquad n\ge 0.
\end{multline*}
On the other hand, we have 
$$
|\hat f(n)|=\Bigl|  
\frac1{\pi}\int_{\bD(2)\setminus\bD} \bar\partial f(z)z^{-n-1}\,dm_2(z)\Bigr|
\le 
 2d'_f\rho_\beta( 2d_f).
$$

If $nd_f^{1+\beta}>1$, then we set $\eps=n^{-1/(2+\beta)}d_f^{-(1+\beta)/(2+\beta)}<1$ and conclude that 
$$
|\hat f(n)|\le Cd'_f\exp\Bigl(-\frac12d_f^{-(1+\beta)/(2+\beta)}n^{(1+\beta)/(2+\beta)}\Bigr).
$$
Otherwise, if $0<n\le d_f^{-(1+\beta)}$, then 
$$
|\hat f(n)| \le  2d'_f\rho_\beta( 2d_f)\le 
Cd'_f\exp\Bigl(-\frac1{2^{1+\beta}}d_f^{-(1+\beta)/(2+\beta)}n^{(1+\beta)/(2+\beta)}\Bigr)
$$
for some absolute constant $C$. Finally,
$$
|\hat f(0)| \le  Cd'_f .
$$
\end{proof}


\end{document}